\theoremstyle{plain}
\newtheorem{theorem}{\bf Theorem}[section]
\newtheorem{proposition}[theorem]{\bf Proposition}
\newtheorem{lemma}[theorem]{\bf Lemma}
\newtheorem{corollary}[theorem]{\bf Corollary}
\newtheorem{conjecture}[theorem]{\bf Conjecture}
\theoremstyle{definition}
\newtheorem{remark}[theorem]{\bf Remark}
\def\Dec{\mathrm{Dec}}
\DeclareMathOperator{\gp}{gp}
 \DeclareMathOperator{\Int}{Int}
\DeclareMathOperator{\spec}{spec}
\DeclareMathOperator{\rev}{Rev}
\newcommand{\N}{\mathbb N}
\newcommand{\Z}{\mathbb Z}
\newcommand{\R}{\mathbb R}
\newcommand{\Q}{\mathbb Q}
\newcommand{\LK}{\,[\![}
\newcommand{\RK}{]\!]}
\newcommand{\Pp}{{\mathbb P}}
\newcommand{\red}{{\text{\rm red}}}
\renewcommand{\t}{\, | \,}
\newcommand{\fin}{\text{\rm fin}}
\newcommand{\Pfz}{\mathcal P_{\fin,0} (\N_0)}
\newcommand{\Pfze}{\mathcal P_{\fin,0} }
\newcommand{\Pf}{\mathcal P_{\fin}}
\newcommand{\Pfn}{\mathcal P_{\fin} (\N_0)}
\newcommand{\abs}[1]{\lvert #1 \rvert}
\numberwithin{equation}{section}
\begin{document}

\title[Power Monoids of numerical monoids]{On algebraic properties of  \\ power monoids of numerical monoids}

\author{Pierre-Yves Bienvenu and Alfred Geroldinger}

\address{Graz University of Technology, NAWI Graz \\
Institute of Analysis and Number Theory \\
Kopernikusgasse 24/2\\
8010 Graz, Austria}
\email{bienvenu@math.tugraz.at}
\urladdr{https://www.math.tugraz.at/~bienvenu/}

\address{University of Graz, NAWI Graz \\
Institute for Mathematics and Scientific Computing \\
Heinrichstra{\ss}e 36\\
8010 Graz, Austria}
\email{alfred.geroldinger@uni-graz.at}
\urladdr{https://imsc.uni-graz.at/geroldinger}

\thanks{This work was supported by the Austrian-French Project ArithRand (FWF  I 4945-N and  ANR-20-CE91-0006) and by the Austrian Science Fund FWF, Project Number P33499.}

\keywords{power monoids, numerical monoids, set addition, sumsets, sets of lengths}

\begin{abstract}
Let $S \subset \mathbb{N}_0$ be a numerical monoid and let $\mathcal P_{\mathrm{fin}} (S)$, resp. $\mathcal P_{\mathrm{fin},0}(S)$, denote the power monoid, resp. the restricted power monoid, of $S$, that is the set of all finite nonempty subsets of $S$, resp. the set of all finite nonempty subsets of $S$ containing 0,  with set addition as operation. The arithmetic of power monoids received  some attention  in recent literature. We complement these investigations by studying algebraic properties  of power monoids, such as their prime spectrum. Moreover, we show that almost all elements of $\mathcal P_{\mathrm{fin},0} (S)$ are irreducible (i.e., they are not proper sumsets).
\end{abstract}

\subjclass[2010]{11B13, 11B30, 20M13}

\maketitle


\section{Introduction} \label{1}

Set addition,  with its innumerable facets, is a central topic in arithmetic combinatorics. Factorization theory studies, for a given monoid or domain, factorizations of elements into irreducible (indecomposable) elements. Both areas are closely connected, for example, by the factorization theory of Krull monoids (\cite{Ge-Ru09, Sc16a}). Tringali pushed forward a new connection by initiating investigations of the arithmetic of power monoids (\cite{Fa-Tr18a, An-Tr21a, Tr22a}), which are defined as follows. For an additive abelian monoid $S$, say for a  numerical monoid,  let $\mathcal P_{\fin} (S)$  denote the set of all finite nonempty subsets of $S$. Together with set addition as operation, $\mathcal P_{\fin} (S)$ is easily seen to be abelian monoid, called the power monoid of $S$, and $\{0_S\}$ is its zero-element.
While, clearly, being of interest in its own rights, the arithmetic of power monoids is connected with the arithmetic of other monoids, such as the monoid of ideals of polynomial rings (\cite[Proposition 5.13]{Ge-Kh22b}).

To fix further terminology, let now $D$ be an additively written monoid (resp. a multiplicatively written monoid, for example a domain). If an element $a \in D$ can be written as a sum $a = u_1 + \ldots + u_k $ (respectively, as a  product $a = u_1 \cdot \ldots \cdot u_k$) of $k$ irreducible elements, then $k$ is called a factorization length of $a$ and the set $\mathsf L (a) \subset \N_0$ denotes the set of all factorization lengths of $a$.
The system $\mathcal L (D) = \{\mathsf L (a) \colon a \in D\}$ of all sets of lengths is a key arithmetic invariant. Algebraic finiteness conditions on $D$ (such as the finiteness of the class group in case when $D$ is a Krull domain) guarantee that sets of lengths are highly structured (see \cite[Chapter 4]{Ge-HK06a} for an overview). On the other hand, there are various classes of monoids with the property that every finite nonempty subset of $\N_{\ge 2}$ occurs as a set of lengths. These classes include Krull monoids with infinite class group having prime divisors in all classes, rings of integer valued polynomials $\Int (D)$, where $D$ is a Dedekind domain having infinitely many maximal ideals of finite index, and others (see \cite{Ka99a, Fr13a, Fr-Na-Ri19a, Go19a, Ch-Fa-Wi22a, Fa-Zh22a, Fa-Fr-Wi23} and \cite{Ge-Zh20a} for a survey).

According to a conjecture of Fan and Tringali, power monoids of large classes of monoids have the same property. We formulate a simple version of this conjecture (see \cite[Section 5]{Fa-Tr18a}).

\smallskip
\begin{conjecture}[{\bf Fan \& Tringali}] \label{1.1}
For every numerical monoid $S$ and every finite nonempty subset $L \subset \N_{\ge 2}$,  there is a finite nonempty set $A \subset S$   such that for its set of lengths $\mathsf L (A)$, with respect to the power monoid $\mathcal P_{\fin} (S)$, we have $\mathsf L (A) = L$.
\end{conjecture}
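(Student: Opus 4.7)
The plan is to treat first the base case $S = \N_0$ and then transfer the construction to a general numerical monoid $S$. For $\N_0$, given a target $L = \{\ell_1 < \ldots < \ell_r\} \subseteq \N_{\geq 2}$, I would engineer a finite set $A \subset \N_0$ whose irreducible factorizations in $\Pf(\N_0)$ realize exactly the lengths in $L$. A natural design template is $A = R + n \{0, d\}$, where $R \subset \N_0$ is a rigid ``core'' supporting several internal factorizations, $d$ is chosen coprime to the data in $R$ to prevent arithmetic coincidences, and $n$ is large. Since $\{0, d\}$ is an atom of $\Pf(\N_0)$, absorbing varying numbers of its copies into alternative factorizations of the core would realize a prescribed range of lengths. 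For the lower bound $\mathsf L(A) \supseteq L$ the task reduces to exhibiting, for each $\ell \in L$, an explicit factorization of $A$ of length $\ell$ — a combinatorial calculation once $R$, $d$, and $n$ are tuned.

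The main obstacle is the upper bound $\mathsf L(A) \subseteq L$, that is, excluding all other factorizations. Sumset decompositions are notoriously hard to classify: an identity $A = A_1 + \cdots + A_k$ imposes strong but implicit constraints on each $A_i$. My strategy would be to embed in $A$ ``markers'' — isolated or extremal points whose positions tightly restrict the shape of any factor — and to combine this with Freiman-type structural theorems on sets of small doubling, thereby reducing the pool of conceivable factors to an enumerable family whose length contributions can be inventoried. The almost-all-irreducibility result proved in this paper provides the sort of structural input one hopes for, since it guarantees that generic factors are automatically atoms; the residual work is to upgrade this generic statement into concrete irreducibility statements for the specific factors that could appear in a hypothetical decomposition of $A$.

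The transfer from $\N_0$ to a general numerical monoid $S$ is subtler than it first appears, since factorizations in $\Pf(S)$ must use subsets of $S$, and an element of $\Pf(S)$ viewed inside $\Pf(\N_0)$ may gain factorizations there (jeopardizing the upper bound), while atoms of $\Pf(\N_0)$ may lose irreducibility inside $\Pf(S)$ (jeopardizing the lower bound). One route is to pick a nonzero $d \in S$, dilate the entire construction to $dA \subseteq d\N_0 \subseteq S$, and then argue by reduction modulo $d$ that any decomposition of $dA$ inside $\Pf(S)$ is componentwise divisible by $d$, and hence corresponds to a decomposition in $\Pf(\N_0)$. Alternatively, one translates $A$ deep into the cofinite interval of $S$ above its Frobenius number, where $S$ and $\N_0$ coincide, and tries to confine all factor sets to that interval. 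Whichever transfer is used, the dominant combinatorial burden lies in the $\N_0$-case, and it is precisely this case that has so far resisted a complete proof in the literature.
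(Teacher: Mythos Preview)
The statement you are addressing is labeled \emph{Conjecture} in the paper, and the paper does not prove it; it only presents supporting evidence (Theorems~\ref{1.2} and~\ref{5.1}, Corollary~\ref{5.2}). There is therefore no ``paper's own proof'' to compare your proposal against. Your write-up is, appropriately, a research strategy rather than a proof, and you say so explicitly in your final sentence. As such it cannot be assessed as a correct or incorrect proof of the conjecture; it is an outline of an approach whose hard step --- the upper bound $\mathsf L(A)\subseteq L$ --- you correctly identify as unresolved.

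Two specific remarks on the plan. First, the appeal to the density result of Section~\ref{6} is unlikely to help: that theorem says almost all sets are atoms, but it gives no handle on the particular sets that can arise as summands of your carefully engineered $A$; generic irreducibility does not upgrade to irreducibility of specific candidate factors without further structural input. Second, the transfer to general $S$ via dilation is not as clean as stated: from $d\cdot A = B+C$ with $B,C\subset S$ one cannot conclude $B,C\subset d\cdot\N_0$ by ``reduction modulo $d$'', since $S$ may contain elements not divisible by $d$ whose pairwise sums are divisible by $d$. You would need an additional argument (for instance, controlling $\min B$ and $\min C$ and the residues of nearby elements) to force the summands into $d\cdot\N_0$, and it is not clear this can be done uniformly. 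The translation-above-Frobenius alternative is more promising but, as you note, still leaves the core $\N_0$ problem open.
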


\smallskip
It is comparatively easy to show that, if the above conjecture holds true for the power monoid of numerical monoids, then the analogue conjecture holds true for power monoids of further large classes of monoids (see \cite{Fa-Tr18a}). The conjecture is backed up by a series of results, some of which we gather in the following theorem.  For simplicity, we formulate the results for the power monoid of $\N_0$ (see Section \ref{2} for the involved definitions and  \cite[Theorem 4.11]{Fa-Tr18a} and \cite[Proposition 5.3]{Ge-Kh22b} for proofs). All results of Theorem \ref{1.2} are simple consequences of Conjecture \ref{1.1}, if it holds true.

\smallskip
\begin{theorem} \label{1.2}
The monoid $\mathcal P_{\fin} (\N_0)$ of all finite nonempty subsets of the nonnegative integers, with set addition as operation, has the following properties.
\begin{enumerate}
\item For its set of distances,  we have $\Delta \big( \mathcal P_{\fin} (\N_0) \big) = \N$.

\item For the unions of sets of lengths, we have $\mathcal U_k \big( \mathcal P_{\fin} (\N_0) \big) = \N_{\ge 2}$ for all $k \ge 2$.

\item For every rational number $q \ge 1$, there is a finite nonempty set $A \subset \N_0$ such that for its set of lengths $\mathsf L (A)$ we have
      \[
      q = \max \mathsf L (A)/\min \mathsf L (A) \,.
      \]
\end{enumerate}
\end{theorem}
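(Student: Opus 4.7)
The overall strategy is constructive: for each of the three invariants, I aim to exhibit an explicit finite $A \subset \N_0$ whose set of lengths $\mathsf L(A)$ in $\mathcal P_\fin(\N_0)$ realises the required value. Two elementary observations underpin everything. First, since $\min$ and $\max$ are additive under sumset addition, one may normalise to $0 \in A$ (the contribution of $\min A$ decouples via copies of the irreducible singleton $\{1\}$); for every factorization $A = A_1 + \cdots + A_k$ in $\mathcal P_\fin(\N_0)$ one then has $0 \in A_i$ for all $i$ and $\sum_i \max A_i = \max A$, yielding the a priori bound $k \le \max A$. Second, every two-element set $\{0, a\}$ with $a > 0$ is irreducible, because a nontrivial factorization $B + C$ with $|B|, |C| \ge 2$ would give $|B + C| \ge |B| + |C| - 1 \ge 3$.

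The workhorse for (2) is the family of intervals $I_n := \{0, 1, \ldots, n\}$. The factorization $I_n = \{0, 1\}^{+n}$ realises length $n$. To realise intermediate lengths, one writes $I_n$ as a sum of the form
\[
\{0, 1\}^{+k_1} + \{0, 2\}^{+k_2} + \cdots + \{0, j\}^{+k_j}
\]
with $\sum_i i\,k_i = n$ and $k_1$ large enough relative to the other $k_i$ to fill all gaps, each summand being irreducible by the observation above. A greedy distribution of multiplicities $k_i$ shows that every integer $k \in [2, n]$ arises as a total length $\sum_i k_i$, so $\mathsf L(I_n) = [2, n]$ for $n \ge 3$. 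Statement (2) is then immediate: for any $k, \ell \in \N_{\ge 2}$, taking $n = \max(k, \ell)$ gives $\{k, \ell\} \subset \mathsf L(I_n)$, whence $\mathcal U_k\bigl( \mathcal P_\fin(\N_0) \bigr) = \N_{\ge 2}$.

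For (1), to realise an arbitrary distance $d \ge 1$, I would consider sets of the form $A = I_n \cup \{N\}$ with $N$ sufficiently larger than $n$. The isolated point $N$ forces any factorization $A = B + C$ into two regimes (either $N$ is produced from distant summands, or the $I_n$-structure constrains both $B$ and $C$ near intervals), and judicious choice of $n, N$ makes $\mathsf L(A)$ a union of two clusters separated by a gap of precisely $d$. For (3), given $q = s/r \in \Q_{\ge 1}$ in lowest terms, a similar composite set — combining $I_s$ with a scaled companion designed so that every factorization uses at least $r$ irreducible summands — yields $\min \mathsf L(A) = r$ and $\max \mathsf L(A) = s$, realising elasticity $q$.

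The principal obstacle in all three parts is the \emph{lower} bound on factorization length: the upper bound is automatic from $\max$-additivity, but excluding unexpected short factorizations $A = B + C$ requires certifying, for the explicit $A$, that every solution with $|B|, |C| \ge 2$ belongs to an explicit list. This reduces, via the decomposition $\max A = \max B + \max C$, to a finite combinatorial case analysis — conceptually elementary but technically the heart of the argument.
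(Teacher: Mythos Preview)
The paper does not prove Theorem~1.2; it is quoted from the literature with a pointer to \cite[Theorem 4.11]{Fa-Tr18a} and \cite[Proposition 5.3]{Ge-Kh22b}. So there is no in-paper argument to compare against, but your proposal has a concrete gap worth naming.

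Your proof of (2) hinges on the claim $\mathsf L(I_n)=[2,n]$, obtained from factorizations $I_n=k_1\{0,1\}+k_2\{0,2\}+\cdots+k_j\{0,j\}$ into two-element atoms. But a sum $\{0,a_1\}+\cdots+\{0,a_k\}$ with $\sum a_i=n$ equals $[0,n]$ exactly when every integer in $[0,n]$ is a subset sum of the multiset $\{a_1,\ldots,a_k\}$; such a complete sequence of length $k$ forces $n\le 2^k-1$, so the smallest length reachable this way is $\lceil\log_2(n+1)\rceil$, not $2$. Concretely, for $n\ge 4$ no sum of two two-element sets can be $[0,n]$, since $\{0,a\}+\{0,b\}$ has at most four elements. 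Realising $2\in\mathsf L(I_n)$ requires atoms of size $\ge 3$ (e.g.\ $[0,5]=\{0,1\}+\{0,1,2,4\}$, and one must \emph{verify} that $\{0,1,2,4\}$ is an atom), which is exactly the ``lower-bound obstacle'' you flag but do not carry out. With only the range $[\lceil\log_2(n+1)\rceil,n]$ in hand, you cannot place $2$ and an arbitrarily large $\ell$ in a common $\mathsf L(I_n)$, so the deduction of $\mathcal U_k=\N_{\ge 2}$ fails as written. Parts (1) and (3) are at the level of intentions (``judicious choice of $n,N$'', ``a similar composite set'') rather than arguments; in the cited references the actual work lies precisely in exhibiting and certifying specific atoms, which your sketch does not yet do.
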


\smallskip
In the present paper, we pursue a new strategy. Instead of studying further arithmetic invariants to back up Conjecture \ref{1.1} and to proceed by combinatorial ad-hoc constructions, we  investigate algebraic properties of power monoids $\mathcal P_{\fin} (S)$ of numerical monoids $S$, with a focus on the nonnegative integers. Along our way, we compare the algebraic properties of power monoids with algebraic properties of Krull monoids and of rings of integer-valued polynomials. These two classes are among the best understood objects in factorization theory. The study of their arithmetic is based on a solid understanding of their algebraic properties. Thus, apart from being of interest in its own right, a better understanding of algebraic properties of power monoids should pave the way for a better understanding of their arithmetic.

In Section \ref{2}, we gather the required background on monoids. Section \ref{3} deals with basic structural properties of power monoids. In Section \ref{4}, we study the prime spectrum of power monoids (main results are Theorems \ref{4.3}, \ref{4.4}, and \ref{4.8}). In Section \ref{5}, we show that some finer arithmetic invariants (the $\omega$-invariants) are infinite, which supports Conjecture \ref{1.1} (see Corollary \ref{5.2} and Remark \ref{5.3}). In Section \ref{6}, we prove that almost all elements in the restricted power monoid of a numerical monoid are atoms (Theorems \ref{th:density} and \ref{6.1}), a property which is in strong contrast to the density of atoms in all monoids studied so far.

\smallskip
\section{Background on monoids} \label{2}
\smallskip

We denote by $\N$ the set of positive integers and we set $\N_0 = \N \cup \{0\}$. For real numbers $a, b \in \R$, we let $[a, b ] = \{x \in \Z \colon a \le x \le b \}$ denote the discrete interval between $a$ and $b$. Let $A$ and  $B$ be sets. We use the symbol $A \subset B$ to mean that $A$ is contained in $B$ but may be equal to $B$.  Suppose that $A$ and $B$ are  subsets of $\Z$. Then $A+B = \{a+b \colon a \in A, b \in B \}$ denotes their sumset and $A-B = \{a-b \colon a \in A, b \in B\}$ denotes their difference set. The set of distances $\Delta (A) \subset \N$ is the set of all $d \in \N$ for which there is $a \in A$ such that $A \cap [a, a+d] = \{a, a+d\}$. For every $k \in \N$, $kA = A + \ldots + A$ is the $k$-fold sumset of $A$ and $k \cdot A = \{ka \colon a \in A\}$ is the dilation of $A$ by $k$. For $k=0$, we set $kA = k \cdot A = \{0\}$.

Let $H$ be a  commutative semigroup with identity element. In this manuscript, we will consider both additively written semigroups (such as the power monoid of numerical monoids) and multiplicatively written semigroups (such as the multiplicative semigroup of nonzero elements of a domain). In this introductory section, we use multiplicative notation since it is more common in factorization theory. We denote by $H^{\times}$ the group of invertible elements of $H$. We say that $H$ is reduced if $H^{\times} = \{1\}$ and we denote by $H_\red=H/H^{\times} = \{ a H^{\times} \colon a \in H\}$ the associated reduced monoid of $H$. An element $a \in H$ is said to be
\begin{itemize}
\item {\it cancellative} if $b, c \in H$ and $ab = ac$ implies that $b=c$, and

\item {\it unit-cancellative} if $a \in H$ and $a = au$ implies that $u \in H^{\times}$.
\end{itemize}
By definition, every cancellative element is unit-cancellative. The semigroup $H$ is said to be {\it cancellative} (resp. {\it unit-cancellative}) if every element of $H$ is cancellative (resp. unit-cancellative).

\medskip
\centerline{\it Throughout this manuscript, a monoid means a}
\centerline{\it commutative unit-cancellative semigroup with identity element.}
\medskip

For a set $P$, we denote by $\mathcal F (P)$ the free abelian monoid with basis $P$. Elements $a\in\mathcal F(P)$ are written in the form
\[
a=\prod_{p\in P} p^{\mathsf v_p(a)}\,,\quad\text{where $\mathsf v_p\colon\mathcal F(P)\to\N_0$ }
\]
is the $p$-adic valuation, and we denote by $|a|=\sum_{p\in P}\mathsf v_p(a)\in\N_0$ the length of $a$.
Let $H$ be a monoid and $S \subset H$ be a subset. If $SH = S$, then $S$ is called an {\it $s$-ideal} of $H$ and we denote by $s$-$\spec (H)$ the set of prime $s$-ideals of $H$. Note that the empty set is a prime $s$-ideal.
The set  $S$ is called {\it divisor-closed} if $a \in S$ and $b \in H$ with $b \mid a $ implies that $b \in S$. Thus, $S \subset H$ is a divisor-closed submonoid if and only if $H \setminus S$ is a prime $s$-ideal of $H$.
We denote by $\LK S \RK$ the smallest divisor-closed submonoid containing $S$. If $S = \{a\}$ for some  $a \in H$, then
\[
\LK \{a \} \RK = \LK a \RK = \{ b \in H \colon b \ \text{divides some power of $a$} \} \subset H
\]
the smallest divisor-closed submonoid of $H$ containing $a$. The monoid $H$ is said to be
\begin{itemize}
\item {\it locally finitely generated} if $\LK a \RK_{\red} \subset H_{\red}$ is finitely generated for all $a \in H$,

\item {\it torsion-free} if $a^n = b^n$, where $a, b \in H$ and $n \in \N$, implies that $a=b$, and

\item a {\it Krull monoid} if it is cancellative, completely integrally closed, and satisfies the ascending chain condition on divisorial ideals (see \cite{HK98, Ge-HK06a} for details).
\end{itemize}
There are an abelian group  $\gp (H)$ (called the {\it Grothendieck group}) and  a monoid homomorphism $\iota \colon H \to \gp (H)$   which have the following universal property:
\begin{itemize}
\item[] For every monoid homomorphism $\varphi \colon H \to G$, where $G$ is an abelian group, there is a  group homomorphism $\psi \colon \gp (H) \to G$ such that $\varphi = \psi \circ \iota$.
\end{itemize}
If $H$ is cancellative, then $\gp (H)$ is the quotient group of $H$.

\smallskip
\noindent
{\bf Arithmetic of Monoids.}  Let $H$ be a monoid. An element $p \in H$ is said to be
\begin{itemize}
\item {\it irreducible} (an {\it atom}) if $p \notin H^{\times}$ and $p = ab$ with $a, b \in H$ implies that $a \in H^{\times}$ or $b \in H^{\times}$, and

\item {\it prime} if $p \notin H^{\times}$ and $p \mid ab$ with $a, b \in H$ implies that $p \mid a$ or $p \mid b$.
\end{itemize}
If $p \in H$ is a cancellative prime element, then
\begin{equation} \label{structure}
H = \mathcal F ( \{p\}) \times T \,, \quad \text{where} \quad T = \{ a \in H \colon p \nmid a \} \,.
\end{equation}
We denote by $\mathcal A (H)$ the set of atoms of $H$, and note that  prime elements are  irreducible.
The free abelian monoid $\mathsf Z (H) = \mathcal F ( \mathcal A (H_{\red}))$ is the {\it factorization monoid} of $H$ and  $\pi \colon \mathsf Z (H) \to H_{\red}$, defined by $\pi (u) = u$ for all $u \in \mathcal A (H_{\red})$, denotes the {\it factorization homomorphism} of $H$. For $a \in H$,
\begin{itemize}
\item $\mathsf Z_H (a) = \mathsf Z (a) = \pi^{-1} (aH^{\times}) \subset \mathsf Z (H)$ is the {\it set of factorizations} of $a$,

\item $\mathsf L_H (a) = \mathsf L (a) = \{ |z| \colon z \in \mathsf Z (a) \} \subset \N_0$ is the {\it set of lengths} of $a$, and

\item $\mathcal L (H) = \{ \mathsf L (a) \colon a \in H \}$ is the {\it system of sets of lengths} of $H$.
\end{itemize}
An  element $p \in H$ is said to be {\it absolutely irreducible} (a {\it strong atom}) if $p$ is irreducible and $|\mathsf Z (p^n)|=1$ for all $n \in \N$. Cancellative prime elements are absolutely irreducible.
We denote by
\[
\Delta (H) = \bigcup_{L \in \mathcal L (H)} \Delta (L) \ \subset \N \quad \text{the {\it set of distances} of $H$}
\]
and, for every $k \in \N$,
\[
\mathcal U_k (H) = \bigcup_{k \in L, L \in \mathcal L (H)} \ L \ \subset \N \quad \text{is the  {\it union of sets of lengths} containing $k$} \,.
\]
The monoid $H$ is said to be {\it atomic} if $\mathsf Z (a) \ne \emptyset$ for all $a \in H$.
A monoid homomorphism $\theta \colon H \to B$, where $B$ is a monoid, is called a {\it transfer homomorphism} if it satisfies the following two properties.
\begin{enumerate}
\item[{\bf (T1)}] $B = \theta(H) B^\times$  and  $\theta^{-1} (B^\times) = H^\times$.

\item[{\bf (T2)}] If $u \in H$, \ $b,\,c \in B$  and  $\theta (u) = bc$, then there exist \ $v,\,w \in H$ \ such that \ $u = vw$,  $\theta (v) \in bB^{\times}$, and  $\theta (w) \in c B^{\times}$.
\end{enumerate}
Transfer homomorphisms allow to pull back arithmetic properties from the (simpler) monoid $B$ to the monoid $H$ (the original object of interest). In particular, they preserve sets of lengths. Thus, if $\theta \colon H \to B$ is a transfer homomorphism and $a \in H$, then $\mathsf L_H (a) = \mathsf L_B \big( \theta (a) \big)$. In particular, this implies that $\theta \big ( \mathcal A (H) \big) = \mathcal A (B)$, that $\theta^{-1} \big( \mathcal A (B) \big) = \mathcal A (H)$, and that $\mathcal L (H) = \mathcal L (B)$. A monoid is said to be {\it transfer Krull} if it allows a transfer homomorphism to a Krull monoid. For more on transfer homomorphisms see \cite{Ge-Zh20a, Ba-Re22a}.

By a domain, we mean a commutative integral domain. Let $D$ be a domain. Then the multiplicative semigroup $D^{\bullet} = D \setminus \{0\}$ is a cancellative monoid. All arithmetic properties of $D^{\bullet}$ will be attributed to $D$ and, as usual,  we set $\mathcal L (D) = \mathcal L (D^{\bullet})$, and so on. If $K$ is the quotient field of $D$, then
\[
\Int (D) = \{ f \in K[X] \colon f (D) \subset D \} \subset K[X]
\]
is the ring of integer-valued polynomials over $D$. The domain $D$ is a Krull domain if and only if $D^{\bullet}$ is a Krull monoid.

\smallskip
\noindent
{\bf Submonoids of $\Z$.} Every additive submonoid of the integers is either a group, or a submonoid of $\N_0$, or a submonoid of $-\N_0$. Let $S$ be a submonoid of $\N_0$. Then $S \cong \gcd (S) \cdot S'$, where $S'$ is a submonoid of $\N_0$ with $\gcd (S')=1$. Submonoids of $\N_0$ whose greatest common divisor is equal to $1$ are called {\it numerical monoids}.  If $S$ is a numerical monoid, then $\N_0 \setminus S$ is finite, $\mathsf F (S) = \max ( \N_0 \setminus S )$ is called the {\it Frobenius number} of $S$ (with the convention that $\mathsf F (\N_0)=0$), $S$ is finitely generated, and its set of atoms $\mathcal A (S)$ is the unique minimal generating set. For a nonempty set $A \subset \N_0$, we denote by $\langle A\rangle=\bigcup_{n=0}^\infty nA$ the submonoid generated by $A$. Then, $\langle A \rangle$ is a numerical monoid if and only if $\langle A \rangle \subset \N_0$ is cofinite if and only if $\gcd(A)=1$.
Otherwise,  $\langle A \rangle $ is cofinite in $\gcd(A)\cdot\N_0$.

\smallskip
\noindent
{\bf Power Monoids.}  For an additive monoid $S \subset \Z$, we denote by
\begin{itemize}
\item $\mathcal P_{\fin} (S)$ the {\it power monoid} of $S$, that is the semigroup of all finite nonempty subsets of $S$ with set addition as operation, and by

\item  $\mathcal P_{\fin, 0} (S)$ the {\it restricted power monoid} of $S$, that is the subsemigroup of $\mathcal P_{\fin} (S)$ consisting of all finite nonempty subsets of $S$ that contain $0$.
\end{itemize}
The study of the arithmetic of power monoids (of various classes of  semigroups) was initiated by Fan and Tringali \cite{Fa-Tr18a} and continued, among others, in \cite{An-Tr21a, Tr22a}.
In the present paper, we study power monoids of numerical monoids with a focus  on power monoids of the nonnegative integers. Let $S \subset \N_0$ be a numerical monoid. Then,
both $\mathcal P_{\fin} (S)$ and $\mathcal P_{\fin, 0} (S)$, are commutative reduced unit-cancellative semigroups (whence  monoids in the present sense) and $\{0\}$ is their zero-element. Clearly, every finite nonempty subset $A \subset S$ is the sum of irreducible sets,  the number $|\mathsf Z (A)|$ of factorizations of $A$ is finite, and the set of lengths $\mathsf L (A)$ is finite.

The arithmetic of numerical monoids has received wide attention in the literature (we refer to the monograph \cite{As-GS16}, to the survey \cite{GS16a}, and to the software package GAP \cite{numericalsgps}). Since numerical monoids are finitely generated, all invariants (sets of distances, unions of sets of lengths, and all the invariants to be discussed in Section \ref{5}) are finite. Furthermore,  also some precise results are known in terms of their set of atoms. Let $S$ be a numerical monoid with $\mathcal A (S) = \{n_1, \ldots, n_t\}$ where $t \ge 2$ and $1 < n_1 < \ldots < n_t$. Then
\[
\max \big\{ \max \mathsf L (a)/\min \mathsf L (a) \colon a \in S \big\} = n_t/n_1 \quad \text{and} \quad \min \Delta (S) = \gcd (n_2-n_1, \ldots, n_t - n_{t-1} ) \,.
\]
Moreover, sets of lengths are highly structured. Indeed, there is $M \in \N_0$ such that, for every $a \in S$,
\[
\mathsf L (a) \cap [\min \mathsf L (a) + M, \max \mathsf L (a) - M]
\]
is an arithmetic progression with difference $\min \Delta (S)$. The arithmetic of the power monoid of $S$ is very different. Conjecture \ref{1.1} states that every finite nonempty subset of $\N_{\ge 2}$ occurs as a set of lengths in the power monoid of any numerical monoid. Furthermore, all arithmetic invariants studied so far have turned out to be infinite (see Theorem \ref{1.2} and Theorem \ref{5.1}).

\smallskip
\section{Basic algebraic properties of power monoids of numerical monoids} \label{3}
\smallskip

In this section, we determine prime elements  and some further elementary properties of power monoids of numerical monoids (for absolutely irreducible elements, we refer to Theorem \ref{absIrr}, and for cancellative elements to Corollary \ref{important}). The first statement of Theorem \ref{3.1}.1 was observed at several places, but we repeat its simple proof  for convenience.

\smallskip
\begin{theorem} \label{3.1}~

\begin{enumerate}
\item $\{1\}$ is a cancellative prime element of $\mathcal P_{\fin} (\N_0)$, whence $\mathcal P_{\fin} (\N_0) = \big\{ \{k\} \colon k \in \N_0 \big\} \oplus \mathcal P_{\fin,0} (\N_0)$. No other element of $\Pfn$ is prime.

\item If $\mathcal P_{\fin, 0} (\N_0) = H_1 \oplus H_2$ for submonoids $H_1$ and $H_2$, then $H_1 = \big\{\{0\} \big\}$ or $H_2 = \big\{\{0\} \big\}$.
\end{enumerate}
\end{theorem}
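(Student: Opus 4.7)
The plan is to handle the two parts of the theorem separately.

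For part~(1), I would begin by verifying that $\{1\}$ is cancellative, which reduces to the observation that set addition by $\{1\}$ is just translation by $1$ and hence injective on $\Pfn$. To see $\{1\}$ is prime, I would use the elementary equivalence $\{1\} \mid X$ if and only if $0 \notin X$; primality then follows because $0 \notin B+C$ if and only if $0 \notin B$ or $0 \notin C$. Given this, the direct sum decomposition $\Pfn = \{\{k\} : k \in \N_0\} \oplus \Pfz$ is either a direct consequence of equation~\eqref{structure} with $T = \Pfz$, or can be checked via the unique representation $A = \{\min A\} + (A - \min A)$. The remaining claim that no other element is prime then reduces, by the standard fact that a prime of a direct sum of reduced monoids must lie in one summand, to showing that $\Pfz$ admits no prime elements (since the only prime of $\{\{k\}: k \in \N_0\} \cong \N_0$ is $\{1\}$).

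To show that $\Pfz$ has no primes, I take $A \in \Pfz$ with $A \neq \{0\}$, set $n = \max A$, and distinguish three cases. If $A \neq [0, n]$, take $B = C = [0, n]$: the identity $A + [0, n] = [0, 2n]$ (which holds because $\{0, n\} \subset A \subset [0, n]$) yields $A \mid B+C$, while a short $\min$/$\max$ computation shows any $Y$ with $A + Y = [0, n]$ must be $\{0\}$, whence $A \neq [0, n]$ forces $A \nmid B$. If $A = [0, n]$ with $n \geq 2$, then $A = \{0, 1\} + [0, n-1]$ is a proper factorization, so $A$ is not an atom and hence not prime. The residual case $A = \{0, 1\}$ requires an explicit witness, e.g.\ $B = \{0, 1, 3\}$ and $C = \{0, 3, 4\}$: a direct check gives $B + C = \{0, 1, 3, 4, 5, 6, 7\}$, which has no isolated element (so $\{0, 1\} \mid B + C$), whereas $3 \in B$ and $0 \in C$ are isolated in their respective sets.

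For part~(2), suppose $\Pfz = H_1 \oplus H_2$. The element $\{0, 1\}$ is an atom of $\Pfz$, as follows from the sumset size bound $|X + Y| \geq |X| + |Y| - 1$. Consequently, its unique decomposition in $H_1 \oplus H_2$ forces $\{0, 1\}$ into a single $H_i$; without loss, assume $\{0, 1\} \in H_1$. Then $[0, k] = k\{0, 1\} \in H_1$ for every $k \geq 0$. For arbitrary $A \in \Pfz$ with $n = \max A$, the identity $A + [0, n] = [0, 2n]$, combined with $[0, n], [0, 2n] \in H_1$ and the uniqueness of the $H_1 \oplus H_2$ decomposition, forces the $H_2$-component of $A$ to equal $\{0\}$. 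Hence $A \in H_1$, and so $H_2 = \{\{0\}\}$.

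The main obstacle I anticipate is the residual case $A = \{0, 1\}$ in part~(1): here the interval trick collapses because $A$ itself equals the relevant interval $[0, 1]$, and the reducibility fallback fails because $\{0, 1\}$ is an atom. This case seems to require a genuinely ad-hoc witness, and the challenge is merely to pick the pair $B, C$ cleanly.
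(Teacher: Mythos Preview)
Your proposal is correct and follows essentially the same approach as the paper: the interval identity $A + [0,\max A] = [0,2\max A]$ is the key device in both parts, the case $A=\{0,1\}$ is handled by an explicit witness (the paper uses $\{0,2,3\}+\{0,1,3\}=[0,6]$ instead of your $\{0,1,3\}+\{0,3,4\}$), and in part~(2) the paper phrases the conclusion via divisor-closedness of the summands rather than uniqueness of the direct-sum decomposition, but these are equivalent in a reduced monoid.
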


\begin{proof}
1. To show that $\{1\}$ is a prime element, let $A, B \subset \N_0$ be finite nonempty such that $\{1\}$ divides $A + B$. Then either $\min (A) \ge 1$ or $\min (B) \ge 1$, whence $\{1\} \t A$ or $\{1\} \t B$. If $\{1\}+A = \{1\}+B$, then $A=B$, whence $\{1\}$ is cancellative. Thus, the structural statement on $\mathcal P_{\fin} (\N_0)$ follows from equation \eqref{structure}.

Now let $A \in \mathcal P_{\fin} (\N_0)$ be an irreducible element distinct from $\{1\}$, whence $A \in \mathcal P_{\fin, 0} (\N_0)$.
 Assume for a contradiction that $A$ is a prime element. Since
\[
A + [0, \max (A)] = [0, 2 \max (A)] = [0, \max (A)] + [0, \max (A)] \,,
\]
it follows that $A \t [0, \max (A)]$. This implies that $A = [0, \max (A)]$, whence $A= \{0, 1\}$ because $A$ is irreducible. Since $[0,6] = \{0,2,3\} + \{0,1,3\}$, $\{0,1\} \t [0,6]$, $\{0,1\} \nmid \{0,2,3\}$, and $\{0,1\} \nmid \{0,1,3\}$, we see that $A=\{0,1\}$ is not prime, a contradiction.

2. Suppose that $\mathcal P_{\fin, 0} (\N_0) = H_1 \oplus H_2$, where $H_1, H_2$ are submonoids.
It is easy to see that these are divisor-closed.
Since $\{0,1\}$ is an atom, we may  suppose without loss of generality that $\{0,1\} \in H_1$. Then $[0,k] = k \{0,1\} \in H_1$ for every $k \in \N$. Let $A \in  \mathcal P_{\fin, 0} (\N_0)$ with $A \ne \{0\}$. Since $A + [0, \max (A)] = [0, 2 \max (A)] \in H_1$ and $H_1 \subset \mathcal P_{\fin, 0} (\N_0)$ is divisor-closed, it follows that $A \in H_1$, whence $H_2 = \big\{ \{0\} \big\}$.
\end{proof}

The next result shows that $\Pf(S)$, for a numerical monoid $S\neq\N_0$, is quite different from $\Pf(\N_0)$ as it has no
prime elements and cannot be decomposed as a direct sum.

\smallskip
\begin{theorem} \label{3.2}
Let $S$ be a numerical monoid with $S \ne \N_0$.
\begin{enumerate}
\item The monoid $\mathcal P_{\fin} (S)$ has no prime elements.
\item If $\mathcal P_{\fin} (S) = H_1 \oplus H_2$ for some submonoids $H_1$ and $H_2$, then $H_1 = \{0\}$ or $H_2 = \{0\}$.

\item If $S'$ is a numerical monoid such that $\mathcal P_{\fin} (S')$ is isomorphic to $\mathcal P_{\fin} (S)$, then $S = S'$.
\end{enumerate}
\end{theorem}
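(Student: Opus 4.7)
The plan is to handle the three parts in sequence. Parts 1 and 2 require combinatorial constructions exploiting the gaps in $S$, and Part 3 will follow from Part 1 together with the characterization of cancellative elements of $\Pf(S)$.

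For Part 1, I would show that no atom $P$ of $\Pf(S)$ is prime by producing $B, C \in \Pf(S)$ with $P \mid B + C$ but $P \nmid B$ and $P \nmid C$. When $P = \{n\}$ is a singleton with $n \in \mathcal A(S)$, set $b = n + \mathsf F(S)$; since $S \neq \N_0$ we have $\mathsf F(S) \geq 1$, so $b > \mathsf F(S)$ lies in $S$, but $b - n = \mathsf F(S) \notin S$, giving $\{n\} \nmid \{b\}$. Meanwhile $2b - n = n + 2\mathsf F(S) > \mathsf F(S)$ lies in $S$, so $\{n\} \mid \{b\} + \{b\}$, and $B = C = \{b\}$ is the required witness. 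For a non-singleton atom $P$, a parallel argument works using intervals $[a, a + r] \cap S$ with $a > \mathsf F(S)$: these intervals lie in $\Pf(S)$, and for a suitable length $r$ (tied to the extremes of $P$) one checks that $P$ fails to divide them on structural grounds while $P$ does divide their sumset.

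For Part 2, assume $\Pf(S) = H_1 \oplus H_2$ with both summands nontrivial. In a direct sum, every atomic identity $Q_1 + \cdots + Q_m = R_1 + \cdots + R_n$ in $\Pf(S)$ must split compatibly with the $(H_1, H_2)$-decomposition, so any two atoms appearing in a common relation must lie in the same $H_i$. I would then exhibit enough nontrivial identities to show that the resulting ``atom co-occurrence graph'' of $\Pf(S)$ is connected, whence all atoms lie in a single $H_i$, contradicting nontriviality of both summands. For instance, in $\Pf(\langle 2, 3 \rangle)$ the identity $2\{0, 2, 3\} = \{0, 2\} + \{0, 2, 3, 4\}$ forces $\{0, 2\}$, $\{0, 2, 3\}$, and $\{0, 2, 3, 4\}$ into the same $H_i$, while the multiple atomic factorizations of elements such as $\{0, 2, 3, 4, 5, 6, 7, 8\}$ propagate this to $\{0, 3\}$ and further atoms; analogous constructions involving the smallest atom $n_1$ of $S$ handle the general case.

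For Part 3, let $\phi \colon \Pf(S) \to \Pf(S')$ be a monoid isomorphism. If $S = \N_0$, then $\Pf(S)$ contains the prime element $\{1\}$ by Theorem \ref{3.1}, so $\Pf(S')$ contains a prime element as well; by Part 1 this forces $S' = \N_0 = S$. Otherwise $S, S' \neq \N_0$. By Corollary \ref{important}, the cancellative elements of $\Pf(S)$ and of $\Pf(S')$ are precisely the singletons. Since $\phi$ preserves cancellativity, it restricts to a monoid isomorphism between the submonoids of singletons, which are canonically isomorphic to $S$ and $S'$ respectively. A monoid isomorphism between numerical monoids extends to a group isomorphism $\Z \to \Z$ of Grothendieck groups; since it sends nonnegative elements to nonnegative elements, it must be the identity, giving $S = S'$. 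The main technical obstacle is Part 2: uniformly constructing enough nontrivial identities among the atoms of $\Pf(S)$ to guarantee atom-connectivity for an arbitrary numerical monoid $S \neq \N_0$.
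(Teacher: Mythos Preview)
Parts 1 and 3 are sound. Your singleton argument in Part 1 is correct and in fact more explicit than the paper's, which simply observes that the singletons form a divisor-closed submonoid isomorphic to $S$ and that a numerical monoid $S \neq \N_0$ has no primes. For non-singleton atoms you only gesture at a construction; the paper makes this concrete: given $a_1 < a_2$ in $A$, set $b = a_2 - a_1$, choose $m$ large enough that $[m,2m] \subset S$, and note that $A$ divides the interval $[2m,4m]$ (via an interval complement) while $[2m,4m] = \bigl([m,2m]\setminus\{m+b\}\bigr) + \bigl([m,2m]\setminus\{2m-b\}\bigr)$ and $A$ divides neither factor. Part 3 is the paper's argument: isomorphisms preserve the cancellative locus, which by Corollary \ref{important} is exactly the singleton submonoid, and isomorphic numerical monoids are equal.

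The genuine gap is Part 2. Your key claim that ``any two atoms appearing in a common relation must lie in the same $H_i$'' is false as stated: a relation $u_1 + u_2 = v_1 + v_2$ with $u_1, v_1 \in H_1$ and $u_2, v_2 \in H_2$ is perfectly compatible with a direct-sum decomposition. What \emph{is} forced is that if one side of a relation is a multiple $kA$ of a \emph{single} atom $A$, then every atom on the other side lies in the same summand as $A$; so your connectivity graph must be built only from relations of this restricted shape, and establishing connectivity uniformly in $S$ is exactly the obstacle you flag and do not resolve. The paper avoids this programme entirely. In a direct sum of reduced monoids the summands are automatically divisor-closed, so it suffices to exhibit one atom $A$ with $\LK A \RK = \Pf(S)$. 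Take $a = \min\bigl(S \cap (S-1)\bigr)$ and $A = \{a, a+1\}$; then $kA = [ka, ka+k]$, and for any $B \in \Pf(S)$ and $k$ large enough the interval $[ka - \min B,\, ka + k - \max B]$ lies in $S$ and satisfies $B + [ka - \min B,\, ka + k - \max B] = kA$, so $B \in \LK A \RK$. This replaces your open-ended combinatorial connectivity argument with a two-line computation.
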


\begin{proof}
1. First, we show that no singleton $\{k\}$ for $k\in S$ is prime.
This follows from the facts that the singletons form a divisor-closed submonoid isomorphic to $S$ and that the numerical monoid $S\neq\N_0$ has no prime element.

Now let $A\in \Pf(S)$ have cardinality at least $2$.
Let $a_1<a_2$ be two elements of $A$ and $b=a_2-a_1>0$.
If $m>0$ is large enough, the intervals $I=[2m-\min(A),4m-\max(A)]$ and $[m,2m]$ are included in
$S$ and $A+I=[2m,4m]$. Moreover,  $[2m,4m]= \big([m,2m]\setminus \{m+b\} \big)+ \big([m,2m]\setminus\{2m-b\} \big)$.
But $A$ does not divide $B=[m,2m]\setminus \{m+b\}$, since $m\in B$ but neither
$m+b$ nor $m-b$ are in $A$.
Similarly, $A$ does not divide $C=[m,2m]\setminus\{2m-b\}$, but it divides $B+C$,
whence  $A$ is not prime.

2. Assume $\mathcal P_{\fin} (S) = H_1 \oplus H_2$ for some submonoids $H_1$ and $H_2$.
Then these submonoids are necessarily divisor-closed.
Consider $a=\min S\cap (S-1)$.
Then $a>0$ because $1\notin S$. The set $A=\{a,a+1\}$ is by construction an atom in $\Pf(S)$. Therefore, $A\in H_1\cup H_2$, say $A\in H_1$.
Since $H_1$ is divisor-closed, this implies that $\LK A\RK\subset H_1$.
On the other hand, $kA=[ka,ka+k]$ whence,  for any $B\in\Pf(S)$ and any $k>\max (\max B,(\mathsf{F}(S)+\min(B))/a)$, we have $B+[ka-\min(B),ka-\max(B)]=kA$ and $[ka-\min(B),ka-\max(B)]\subset S$. Thus, it follows that
$B$ divides $kA$ in $\Pf(S)$, whence $B\in \LK A\RK$. Therefore, we infer that $\LK A\RK=\Pf(S)$, whence $H_1=\Pf(S)$ and $H_2= \big\{\{0\} \big\}$.

3. Let $S'$ be a numerical monoid and let $\varphi \colon \mathcal P_{\fin} (S) \to \mathcal P_{\fin} (S')$ be a monoid isomorphism. Since $\mathcal P_{\fin} (\N_0)$ has a prime element and $\mathcal P_{\fin} (S)$ has no prime elements, it follows that $S' \ne \N_0$. Clearly,
\[
H_0 = \big\{ \{k\} \colon k \in S \big\} \quad \text{and} \quad H_0' = \big\{ \{k\} \colon k \in S' \big\}
\]
are divisor-closed submonoids of $\mathcal P_{\fin} (S)$ and $\mathcal P_{\fin} (S')$. Furthermore, $H_0 \cong S$, $H_0' \cong S'$, and $H_0, H_0'$ are cancellative monoids. We continue with the following assertion.

\begin{enumerate}
\item[{\bf A1.}] $\varphi (H_0) = H_0'$.
\end{enumerate}

Suppose that {\bf A1} holds. Since isomorphic numerical monoids are equal, it follows that $H_0 = H_0'$, whence $S = S'$.

{\it Proof of {\bf A1.}}   Every monoid isomorphism $\psi \colon D \to D'$ maps divisor-closed cancellative submonoids of $D$ onto divisor-closed cancellative submonoids of $D'$. Thus, it remains to prove that $H_0$ is the only cancellative divisor-closed submonoid of $\mathcal P_{\fin} (S)$ distinct from $\big\{ \{0\} \big\}$. Let $H_1 \subset \mathcal P_{\fin} (S)$ be a divisor-closed submonoid, which is distinct from $\{ \{0\}\}$ and distinct from $H_0$. Since $H_0 \cong S$ and $S$ is primary, $H_0$ has no proper divisor-closed submonoids. Thus, $H_1$ is not contained in $H_0$, whence there is a subset $A \subset H_1$ with $|A| \ge 2$. But then $\LK A \RK \subset H_1$ is not cancellative,
so $H_1$ is not cancellative.
\end{proof}

Although the monoid $\Pf(S)$ characterises the numerical monoid $S$, its Grothendieck group does not depend on $S$.

\smallskip
\begin{theorem}
Let $S$ be a numerical monoid. Then the Grothendieck group of $\mathcal P_{\fin} (S)$ is isomorphic to $\Z\oplus\Z$ and the Grothendieck group of $\Pfze(S)$ is isomorphic to $\Z$.
\end{theorem}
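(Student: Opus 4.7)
The plan is to exhibit natural monoid homomorphisms from the two power monoids into $\Z \oplus \Z$ and $\Z$, apply the universal property of the Grothendieck group to extend them to group homomorphisms, and verify that both extensions are isomorphisms. Concretely, set $\phi \colon \Pf(S) \to \Z \oplus \Z$ by $\phi(A) = (\min A, \max A)$ and $\psi \colon \Pfze(S) \to \Z$ by $\psi(A) = \max A$. Since both $\min$ and $\max$ are additive on sumsets of finite sets, $\phi$ and $\psi$ are monoid homomorphisms and extend to group homomorphisms $\hat\phi \colon \gp(\Pf(S)) \to \Z \oplus \Z$ and $\hat\psi \colon \gp(\Pfze(S)) \to \Z$.

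Surjectivity is immediate from the fact that $\gp(S) = \Z$: the singletons $\{k\}$ with $k \in S$ map to the diagonal elements $(k,k)$, generating the diagonal of $\Z\oplus\Z$ in the image, while the pairs $\{s, s+d\}$ (for $s > \mathsf F(S)$ and $d \in \N_0$) give, after subtracting $\{s\}$, every $(0, d)$ with $d \in \N_0$, hence every $(0,d)$ with $d \in \Z$ after passing to inverses; together these span $\Z \oplus \Z$. Dropping the first coordinate and using $\{0,k\} \mapsto k$ yields surjectivity of $\hat\psi$.

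Injectivity of $\hat\phi$ reduces to the following: whenever $\min A = \min B$ and $\max A = \max B$ in $\Pf(S)$, one must find $C \in \Pf(S)$ with $A + C = B + C$. Pick $m > \mathsf F(S)$ and $K \ge \max A - \min A$; the interval $C = [m, m+K]$ is contained in $S$, hence belongs to $\Pf(S)$, and a direct computation shows $A + C = [\min A + m, \max A + m + K]$, a set depending only on $\phi(A)$. Thus $A + C = B + C$.

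The main obstacle I anticipate is injectivity of $\hat\psi$: any witness $C$ must contain $0$, so $A \subseteq A + C$, and the low part of $A + C$ encodes $A$ itself, rendering useless the easy choice of a shifted interval. The trick is to take $C$ thick rather than thin: set $C = S \cap [0, K]$ with $K > \mathsf F(S) + \max A$. Then $C \in \Pfze(S)$, and I will show that $A + C = S \cap [0, n + K]$ for every $A \in \Pfze(S)$ with $\max A = n$, independently of the finer structure of $A$. The inclusion $\subseteq$ is immediate. For $\supseteq$, given $s \in S \cap [0, n + K]$: if $s \le K$, write $s = 0 + s$ with $0 \in A$ and $s \in C$; if $K < s \le n + K$, then $s - n \in (K - n, K]$ and the inequality $K - n > \mathsf F(S)$ forces $s - n \in S \cap [0, K] = C$, so $s = n + (s - n) \in A + C$ with $n = \max A \in A$. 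Applying this to both $A$ and $B$ yields $A + C = S \cap [0, n + K] = B + C$ whenever $\max A = \max B$, which completes the proof.
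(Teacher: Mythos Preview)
Your proof is correct and follows essentially the same approach as the paper: both arguments establish the isomorphism by using the homomorphisms $(\min,\max)$ and $\max$, and both reduce injectivity to producing a witness $C$ with $A+C=B+C$, choosing $C$ to be a long interval $[m,m+K]\subset S$ in the unrestricted case and an initial segment $S\cap[0,K]$ in the restricted case. The only cosmetic difference is that the paper works directly with the explicit pair construction of the Grothendieck group, whereas you invoke the universal property and check the kernel; the witness sets and the verification that $A+C$ depends only on $(\min A,\max A)$ (resp.\ $\max A$) are the same.
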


\begin{proof}
Let $H$ be an additively written monoid. We define an equivalence relation on $H \times H$. We say that two pairs $(A, B)$ and $(C, D)$ are equivalent if there is $E \in H$ such that
\begin{equation}
\label{abe}
A+D+E = C+B+E \,.
\end{equation}
Let
\[
G = \{ [(A, B)] \colon A, B \in H \}
\]
be the set of equivalence classes of pairs $(A, B) \in H \times H$, and let
addition be defined via representatives. Then $G$ is an abelian group which satisfies the universal property characterizing the  Grothendieck group of $H$.

(i) Let  $H = \mathcal P_{\fin} (S)$.
Let $(A, B)$ and $(C, D)$ be pairs of elements of $H$. If there is $E \in H$ such that $A+D+E = C+B+E$, then $\max (A+D) = \max (C+B)$ and $\min(A+D)=\min(C+B)$.
Conversely, suppose $\max (A+D) = \max (C+B)=M$, and  $\min(A+D)=\min(C+B)=m$.  Then, for $E = [\mathsf F(S)+1, M-m+ \mathsf F (S)+1]\in H$, we have
\[
A+D + E = [\mathsf F(S)+1+m,\mathsf F(S)+1+2M-m] = C+B+E \,,
\]
whence $(A,B)$ and $(C,D)$ are equivalent. Since $\max$ and $\min$ are both additive functions, the pairs $(A,B)$ and $(C,D)$ are equivalent if and only if
\begin{align*}
\max (A) - \max (B) &= \max (C) - \max (D) \,\text{ and}\\
\min (A) - \min (B) &= \min (C) - \min (D) \,.
\end{align*}
Thus, the map $\phi : G\rightarrow\Z^2$ defined by $\phi([A,B])=(\max A-\max B,\min A-\min B)$ is well defined and a group homomorphism.
The arguments above show that it is injective, and it is surjective since
$\phi([\{\mathsf F(S)+1, \mathsf F(S)+2\},\{\mathsf F(S)+1\}])=(1,0)$ and similarly $(0,1)$ is attained.

(ii) Let $H = \mathcal P_{\fin} (S)$.
The proof runs along the same lines.
If $(A,B,C,D)\in H^4$ satisfy \eqref{abe}, then $\max (A+D) = \max (C+B)$.
Conversely, suppose $\max (A+D) = \max (C+B)=M$.
Then, for $E = S\cap[0, M+ \mathsf F (S)+1]\in H$, we have
$A+D + E =  C+B+E$,
whence $(A,B)$ and $(C,D)$ are equivalent.
Thus, the  map $G\rightarrow \Z$, defined by $(A,B)\mapsto \max A-\max B$, is an isomorphism.
\end{proof}

In the final result of this section, we observe that  studying divisor-closed submonoids of $\Pfn$ is equivalent to studying
divisor-closed submonoids of $\Pfz$, which we will do in Section \ref{4}. Furthermore, note that $\mathcal P_{\fin} (S)$, with $S \subsetneq \N_0$ being a numerical monoid, is a submonoid but not a divisor-closed submonoid of $\mathcal P_{\fin} (\N_0)$. Indeed, if $k\in S$,
then $\{k\}=k\{1\}$ yet $\{1\}\notin\Pf(S)$ and $\{k\}\in\Pf(S)$.

\smallskip
\begin{proposition} \label{3.3}
Let $H$ be a divisor-closed submonoid of $\Pfn$.
Then there exists  a divisor-closed submonoid $H'$ of $\Pfz$ such that $H=H'$ or $H=\{\{k\} : k\in\N_0\}\oplus H'$.
\end{proposition}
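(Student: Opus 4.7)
The plan is to exploit the direct-sum decomposition $\Pfn = H_0 \oplus \Pfz$ from Theorem \ref{3.1}.1, where $H_0 = \{\{k\} : k \in \N_0\}$ is the cyclic submonoid generated by the prime element $\{1\}$. In this decomposition every $A \in \Pfn$ writes uniquely as $A = \{\min A\} + (A - \min A)$ with $A - \min A \in \Pfz$. I would split the argument according to whether or not $\{1\}$ belongs to $H$.

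If $\{1\} \notin H$, I would first show $H \subset \Pfz$ and then set $H' = H$. The key point is that, for any $A \in H$, the summand $\{\min A\}$ divides $A$ in $\Pfn$, so by divisor-closedness $\{\min A\} \in H$; if $\min A > 0$, then $\{1\}$ divides $\{\min A\} = (\min A)\{1\}$, forcing $\{1\} \in H$ and contradicting the assumption. Hence $\min A = 0$, that is, $A \in \Pfz$. That $H' = H$ is then divisor-closed inside $\Pfz$ is automatic, since a divisibility relation in $\Pfz$ is a fortiori one in $\Pfn$.

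If $\{1\} \in H$, then $\{k\} = k\{1\} \in H$ for every $k \in \N_0$, so $H_0 \subset H$. I would set $H' = H \cap \Pfz$ and prove $H = H_0 \oplus H'$. The nontrivial inclusion is $H \subset H_0 + H'$: for $A \in H$, the divisor $A - \min A$ of $A$ in $\Pfn$ lies in $H \cap \Pfz = H'$, so the canonical decomposition exhibits $A$ in $H_0 + H'$. Directness is inherited from $\Pfn = H_0 \oplus \Pfz$, and divisor-closedness of $H'$ in $\Pfz$ follows by the same remark as in the previous case.

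I anticipate no substantive obstacle; the whole argument is bookkeeping around the decomposition $A = \{\min A\} + (A - \min A)$ combined with the primality and cancellativity of $\{1\}$. The one point that requires a moment of care is to distinguish divisibility in $\Pfn$ (used to invoke divisor-closedness of $H$) from divisibility in $\Pfz$ (used for $H'$), but the inclusion $\Pfz \subset \Pfn$ renders each passage automatic.
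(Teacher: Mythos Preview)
Your proof is correct and follows essentially the same approach as the paper: both arguments hinge on the decomposition $A = \{\min A\} + (A-\min A)$ and use divisor-closedness of $H$ together with primality of $\{1\}$ to either force $H\subset\Pfz$ or obtain $H_0\subset H$. The only cosmetic difference is that the paper defines $H' = \{B-\min(B): B\in H\}$ uniformly at the outset rather than case-by-case, but this yields the same $H'$ as yours in each case.
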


\begin{proof}
The monoid $H'=\{B-\min(B) : B\in H\}$ is  a divisor-closed submonoid of $\Pfz$.
If  $H\subset \Pfz$, then $H=H'$. Suppose that this is not the case. Then there exists $D\in H$ such that $0\notin D$. But, then $\{1\}$ divides $D$, whence $\{1\}\in H$ and $\{\{k\} : k\in\N_0\}\subset H$.
The decomposition $B=\{\min B\}+ (B-\min(B))$ for $B\in H$ then yields the result.
\end{proof}

\smallskip
\section{Prime spectrum and divisor-closed submonoids of restricted power monoids} \label{4}
\smallskip

In this section, we study the prime spectrum of restricted power monoids of numerical monoids. Let $S$ be a numerical monoid. A subset $\mathfrak p \subset \mathcal P_{\fin, 0} (S)$ is a prime $s$-ideal if and only if its complement $\mathcal P_{\fin, 0} (S) \setminus \mathfrak p$ is a divisor-closed submonoid. Furthermore, maximal divisor-closed submonoids correspond to minimal nonempty prime $s$-ideals.  Thus, we may   formulate our results on  the prime spectrum of power monoids in terms of divisor-closed submonoids.

Let $S_1 \subset S_2 \subset \N_0$ be  submonoids. Then $\mathcal P_{\fin, 0} (S_1) \subset \mathcal P_{\fin, 0} (S_2)$ is a divisor-closed submonoid, whence the divisor-closed submonoids of $\mathcal P_{\fin, 0} (S_1)$ are those divisor-closed submonoids of $\mathcal P_{\fin, 0} (S_2)$ that are contained in $\mathcal P_{\fin, 0} (S_1)$ (for overmonoids of a given numerical monoid see \cite{MR1978528, MR2564064}). Thus, when characterizing divisor-closed submonoids of restricted power monoids of numerical monoids, we may restrict to the restricted power monoid of the non-negative integers.
In particular, all monoids $\mathcal P_{\fin, 0} (S)$, where $S \subset \N_0$ is a submonoid, are divisor-closed submonoids of $\mathcal P_{\fin, 0} (\N_0)$.
But, they are not the only examples. Consider the set $A=\{0,1,3\}$. Since $\langle A \rangle = \N_0$, $\N_0$ is the only submonoid $S \subset \N_0$ with $\LK A\RK\subset \Pfze(S)$. However, since no set $C\in\Pfz$ with
 $\max C-1\in C$ divides a multiple of $A$, it follows that $\LK A \RK\neq \Pfz$.

 This example motivates the introduction of the \textit{reversion} operator as follows.
 For $B\in\Pfz$, we set $\rev (B):=\max B-B\in \Pfz$.
 This operator has the following  properties for all $B,C\in\Pfz$:
 \begin{itemize}
 \item $\rev(B+C)=\rev(B)+\rev(C)$;
 \item $B\mid C$ if and only if $\rev(B)\mid\rev(C)$;
 \item $\rev(\rev(B))=B$ (i.e., the reversion operator $\rev$ is involutive).
 \end{itemize}
For a subset $H\subset \Pfz$, we set $\rev(H):=\{\rev(B):B\in H\}$.
The  properties above show that $H$ is a submonoid, resp. a divisor-closed submonoid, of $\Pfz$,
if and only if $\rev(H)$ is a submonoid, resp. a divisor-closed submonoid, of $\Pfz$.
Thus, for any submonoid $S\subset\N_0$, the set $\rev(\Pfze(S))$ is a divisor-closed
submonoid of $\Pfz$ and, for any two submonoids $S,T\subset\N_0$,
 $\Pfze(S)\cap\rev(\Pfze(T))$
is a divisor-closed submonoid of $\Pfz$.

In the above example, we observe that $\rev(A)=\{0,2,3\}$, $\rev (A)$
generates the numerical monoid $\N_0\setminus\{1\}$, and
$\LK A\RK =\rev(\Pfze(\N_0\setminus\{1\}))$.
We will actually show that all divisor-closed submonoids of $\Pfz$ are of the form $\Pfze(S)\cap\rev(\Pfze(T))$.

Since we  study submonoids of the form $\LK A\RK$,
we  need to understand iterated sumsets.
Obviously, for any $n\in\N$, we have
\[
nA\subset \langle A\rangle \quad \text{and} \quad nA=\rev(n\rev(A))\subset n\max A-\langle\rev(A)\rangle \,.
\]
It turns out that these two inclusions characterise $nA$, for $n$ large enough in terms of $A$.

\smallskip
\begin{lemma} \label{4.1}
Let $A \in \mathcal P_{\fin, 0} ( \N_0)$ with $A \ne \{0\}$. Then there exists a constant $n^*(A) \in  \N$ such that, for all $n \ge n^* (A)$,
\[
nA = \langle A\rangle \cap \big(n\max A-\langle\rev(A)\rangle \big).
\]
\end{lemma}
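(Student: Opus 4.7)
Set $m := \max A$ and $a_1 := \min(A \setminus \{0\})$. The inclusion $nA \subseteq \langle A \rangle \cap (nm - \langle \rev(A) \rangle)$ is already noted, so the task is the reverse inclusion for $n$ sufficiently large. The plan is to exploit the reversion identity $nA = nm - n\rev(A)$, which reformulates the condition $k \in nA$ as the existence of a factorisation of $nm - k$ of length at most $n$ in $\rev(A) \setminus \{0\}$ (the missing summands being padded with zeros, using $0 \in \rev(A)$). Hence the goal is to produce such an economical factorisation of $nm - k$ for every $k$ on the right-hand side.

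The first key step will be an Apery-style length bound: there exists a constant $C' = C'(A)$ such that every $N \in \langle \rev(A) \rangle$ factors as a sum of at most $N/m + C'$ elements of $\rev(A) \setminus \{0\}$. Indeed, since $m = \max \rev(A)$ belongs to $\rev(A)$, it can serve as a distinguished generator: for each $w$ in the Apery set $\mathrm{Ap}(\langle \rev(A) \rangle, m)$ one fixes a factorisation of length $\ell(w)$ using $\rev(A) \setminus \{0\}$, sets $C' := \max_w \ell(w)$, and decomposes $N = w + qm$ with $q = (N - w)/m \in \N_0$, concatenating to obtain a factorisation of length $q + \ell(w) \leq N/m + C'$. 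When $\gcd(A) = d > 1$ one runs the same argument in the rescaled numerical monoid $\langle \rev(A) \rangle/d$, with $m/d$ playing the role of $m$.

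With the Apery bound in hand, fix $k \in \langle A \rangle \cap (nm - \langle \rev(A) \rangle)$. Applied to $N := nm - k$, the bound yields a factorisation $nm - k = c_1 + \cdots + c_t$ with $c_i \in \rev(A) \setminus \{0\}$ and $t \leq n - k/m + C'$, which is $\leq n$ as soon as $k \geq m C'$; writing $c_i = m - a_i'$ with $a_i' \in A \setminus \{m\}$ then exhibits $k = (n-t)m + a_1' + \cdots + a_t' \in nA$. For the complementary regime $0 \leq k < mC'$, the hypothesis $k \in \langle A \rangle$ is used directly: any factorisation $k = b_1 + \cdots + b_s$ with $b_i \in A \setminus \{0\}$ satisfies $s \leq k/a_1 < mC'/a_1$, and padding with $n - s$ zeros gives $k \in nA$ provided $n \geq mC'/a_1$. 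Setting $n^*(A) := \lceil mC'/a_1 \rceil$ therefore covers both regimes simultaneously.

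The main obstacle is establishing the Apery-style bound together with its correct scaling when $\gcd(A) > 1$; once that is in place, the rest is combinatorial bookkeeping. A pleasing structural feature is that the two defining conditions of the right-hand side are used in complementary regimes: $nm - k \in \langle \rev(A) \rangle$ handles large $k$ through reversion, while $k \in \langle A \rangle$ handles small $k$ through direct factorisation, and no single $k$ requires both hypotheses at once.
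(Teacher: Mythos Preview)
The paper does not actually prove this lemma; its proof reads in full ``See \cite[Theorem 1.1]{Na96b}'', a citation to Nathanson's textbook on additive number theory. Your argument is a correct and self-contained proof of this classical structure theorem for iterated sumsets, and it is essentially the standard one: the Ap\'ery-type length bound (every $N\in\langle\rev(A)\rangle$ is a sum of at most $N/m+C'$ nonzero generators, with $m=\max\rev(A)\in\rev(A)$ as the distinguished generator) is precisely the mechanism behind Nathanson's proof, and your two-regime split---reversion for $k\ge mC'$, direct factorisation in $\langle A\rangle$ for $k<mC'$---is the natural way to organise it. So your approach differs from the paper only in that you supply the argument rather than outsource it; mathematically there is nothing to compare.
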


\begin{proof}
See \cite[Theorem 1.1]{Na96b}.
\end{proof}

We would like to record that various aspects of the $n$-fold sumset have received wide attention in the literature. For recent progress, we refer to \cite{El22a, Co-El-MR22}. A tight bound for $n^* (A)$ was obtained in \cite{Gr-Wa21a}. We continue a simple consequence of the previous lemma.

\smallskip
\begin{corollary} \label{important}
The set of  cancellative elements of $\Pfn$ is $\big\{ \{k\} \colon k \in \N_0 \big\}$.
In fact, for any $A\in\Pf(\N_0)$ of cardinality at least 2,
$A$ is not even cancellative in $\LK A \RK$.
\end{corollary}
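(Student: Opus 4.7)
The plan is to split the statement by the cardinality of $A$. For singletons $\{k\}$ with $k\in\N_0$, cancellativity is immediate: if $\{k\}+B=\{k\}+C$, then translating both sides by $-k$ yields $B=C$. All the substance lies in the second assertion: for every $A\in\Pf(\N_0)$ with $|A|\ge 2$ I must produce two distinct elements $B,C\in\LK A\RK$ with $A+B=A+C$. Note that this is genuinely stronger than failure of cancellativity in $\Pfn$ (which is all one needs for the first assertion), since cancellativity in $\Pfn$ would automatically pass to any submonoid containing $A$.

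My strategy is to perturb the power $2A$ by deleting a single, well-chosen element. Let $a_0=\min A$ and let $a_1$ be the next-smallest element of $A$; both exist because $|A|\ge 2$. I would set $C=2A$ and $B=2A\setminus\{a_0+a_1\}$. Since $a_0+a_1\in 2A$ we have $B\ne C$, and $C$ is literally a power of $A$, so certainly $C\in\LK A\RK$. The heart of the proof is the identity
\[
A+B=A+C=3A.
\]

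The inclusion $A+B\subseteq A+C=3A$ is immediate from $B\subset C$. For the reverse, since $C\setminus B=\{a_0+a_1\}$, it suffices to verify that $a+a_0+a_1\in A+B$ for every $a\in A$. I would dispose of this by three cases: $a=a_0$ via $a_1+2a_0$; $a=a_1$ via $a_0+2a_1$; $a>a_1$ via $a_1+(a+a_0)$. In each case the second summand lies in $2A$ and differs from $a_0+a_1$ because of the strict inequalities $2a_0<a_0+a_1<2a_1$ in the first two cases and $a>a_1$ in the third, hence belongs to $B$.

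Once $A+B=3A$ is established, the rest is automatic: $B$ divides the power $3A$ of $A$, and since $\LK A\RK$ is divisor-closed by definition, $B\in\LK A\RK$. Combining everything, $B,C\in\LK A\RK$, $B\ne C$, and $A+B=A+C$, so $A$ is not cancellative in $\LK A\RK$. The step I expect to require the most care, and the reason for this particular deletion rather than a more naive one, is precisely the case analysis above: the element $a_0+a_1$ sits strictly between $2a_0$ and $2a_1$, and this is exactly the room needed to reroute every $a+a_0+a_1$ through an alternative representative in $B$ without ever landing on the deleted element.
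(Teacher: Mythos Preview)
Your proof is correct and takes a genuinely different, more elementary route than the paper.

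The paper first normalizes to $0\in A$ and $\gcd(A)=1$, then invokes Lemma~\ref{4.1} (Nathanson's structure theorem for $nA$) to choose $n\ge n^*(A)$ large enough so that $nA$ is an interval apart from two fixed fringes. With this structure in hand, it argues that $(n+1)A=(nA\setminus\{\max A\})+\{0,\max A\}$ and hence $A+nA=A+(nA\setminus\{\max A\})$. Your argument avoids Lemma~\ref{4.1} entirely: by deleting $a_0+a_1$ (the sum of the two smallest elements) from $2A$, you get the identity $A+(2A\setminus\{a_0+a_1\})=3A$ directly from the three-case rerouting, with no asymptotic input and no normalization. The key observation that makes your choice work---and that a deletion of, say, $2a_0$ or $2a_1$ would not---is that $a_0+a_1$ lies strictly between $2a_0$ and $2a_1$, so each lost sum $a+a_0+a_1$ can be rewritten using one of these neighbours. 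What the paper's approach buys is coherence with the rest of Section~\ref{4}, where Lemma~\ref{4.1} is the workhorse anyway; what your approach buys is a self-contained, sharper argument that already works at $n=2$ rather than at some unspecified large $n$.
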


\begin{proof}
Clearly, for every $k \in \N_0$, the set $\{k\}$ is a cancellative element of $\Pfn$. Let $A \in \Pfn$ with $|A| \ge 2$. It suffices to show that $A$ is not cancellative  in $\LK A \RK$.
To do so, we assert that, for some $n\in\N$,  $(n+1)A=A+nA=A+B$ for some
$B\subsetneq nA$.
After shifting $A$ if necessary,  we may suppose that $0\in A$. Furthermore,   we may also suppose that $\gcd(A)=1$ and we set $a = \max A$.
Now let $n\ge n^* (A)$ be large enough so that  Lemma \ref{4.1} holds.
Then, there exist  finite sets $F,G\in\Pfz$ and natural integers $r,q$ such that
\[
nA=F\cup (r,an-q)\cup an-G \quad \text{ and} \quad  (n+1)A=F\cup (r, a(n+1)-q)\cup a(n+1)-G \,.
\]
We claim that $(n+1)A=nA\setminus\{a\}+\{0,a\}$.
Indeed, $(n+1)A\setminus\{a,2a\}\subset nA\setminus\{a\}+\{0,a\}\subset (n+1)A$
by definition, and it is easy to check that $a$ and $2a$ are in $nA\setminus\{a\}+\{0,a\}$ too (since $a\neq 0$ and $n\ge 2$).
Therefore, we obtain that $(n+1)A=A + (nA\setminus\{a\})$ as desired.
\end{proof}

We now provide a characterization of  divisor-closed submonoids of $\Pfz$.

\smallskip
\begin{theorem} \label{4.2}~
Let $H \subset \mathcal P_{\fin, 0} (\N_0)$ be a divisor-closed submonoid.
Then $H=\mathcal P_{\fin,0} (S)\cap \rev(\mathcal P_{\fin,0} (T))$, where $S=\langle B:B\in H\rangle$ and $T=\langle\rev(B) : B\in H\rangle$.
Moreover,  there exists  an $A\in H$ with $S=\langle A\rangle$ and $T=\langle \rev(A)\rangle$ and for any such $A$ we have $H=\LK A\RK$.
\end{theorem}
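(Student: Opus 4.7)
Plan. The inclusion $H\subset\mathcal P_{\fin,0}(S)\cap\rev(\mathcal P_{\fin,0}(T))$ is immediate: for $B\in H$, we have $B\subset\langle B\rangle\subset S$ and $\rev(B)\subset\langle\rev(B)\rangle\subset T$. To produce the set $A$, I exploit that every submonoid of $\N_0$ is finitely generated: choosing finitely many $B_1,\ldots,B_m\in H$ so that $\bigcup_i B_i$ generates $S$ and simultaneously $\bigcup_i\rev(B_i)$ generates $T$, I set $A=B_1+\cdots+B_m\in H$. Because $0\in B_i$ for each $i$, every $B_j$ embeds into $A$ (insert $0$'s in the other summands), so $\langle A\rangle\supset\bigcup_i B_i$ and hence $\langle A\rangle=S$; the additivity of $\rev$ on sumsets gives $\langle\rev(A)\rangle=T$ by the same argument.

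For the main equality $\LK A\RK=\mathcal P_{\fin,0}(S)\cap\rev(\mathcal P_{\fin,0}(T))$, the inclusion $\LK A\RK\subset\mathcal P_{\fin,0}(S)\cap\rev(\mathcal P_{\fin,0}(T))$ follows from the first paragraph combined with $\LK A\RK\subset H$. The substance is the reverse inclusion. Fix $C\in\mathcal P_{\fin,0}(S)\cap\rev(\mathcal P_{\fin,0}(T))$, set $c=\max C$, and note that the common value $\gcd(A)=\gcd(S)=\gcd(T)$ lets me rescale so that, without loss of generality, $S$ and $T$ are numerical monoids with finite Frobenius numbers $\mathsf F(S)$ and $\mathsf F(T)$. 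For $m$ sufficiently large, Lemma~\ref{4.1} gives $mA=S\cap(m\max A-T)$, and I take $D=\{d\in\N_0:d+C\subset mA\}$, which automatically satisfies $C+D\subset mA$. The theorem reduces to showing $mA\subset C+D$.

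I split $x\in mA$ into two regimes. If $x\le m\max A-\mathsf F(T)-c-1$, I pivot at the minimum of $C$: take $c'=0$ and $d=x$. For every $c''\in C$, the sum $x+c''$ lies in $S$ (since $x,c''\in S$), while $m\max A-x-c''\ge\mathsf F(T)+1$ lies in $T$. If instead $x\ge m\max A-\mathsf F(T)-c$, I pivot at the maximum of $C$: take $c'=c$ and $d=x-c$. For every $c''\in C$, the quantity $m\max A-x+c-c''=(m\max A-x)+(c-c'')$ lies in $T$ as the sum of two elements of $T$ (here the hypothesis $\rev C\subset T$ enters decisively); meanwhile $x-c+c''$ exceeds $\mathsf F(S)$ for $m$ large, hence lies in $S$. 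In either regime $d+C\subset mA$, so $d\in D$ and $x=c'+d\in C+D$.

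The main difficulty is the asymmetry between the two constraints cutting out $mA$: the $S$-condition transfers through $C$ cleanly when we pivot at $0$ (using $C\subset S$), whereas the $T$-condition transfers through $C$ cleanly when we pivot at $\max C$ (using $\rev C\subset T$). Ensuring that these two pivots together cover every $x\in mA$ is precisely what the two-sided description of $mA$ from Lemma~\ref{4.1} makes possible, and it is also the fundamental reason the characterization of $\LK A\RK$ involves both $S$ and $T$ simultaneously.
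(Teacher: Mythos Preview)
Your proof is correct and follows essentially the same strategy as the paper: reduce to $\gcd(A)=1$, invoke Lemma~\ref{4.1} to identify $mA$ with $S\cap(m\max A-T)$, and show that every $C\in\mathcal P_{\fin,0}(S)\cap\rev(\mathcal P_{\fin,0}(T))$ divides $mA$ for $m$ large. Your construction of $A$ via finite generation of submonoids of $\N_0$ is a direct variant of the paper's ascending-chain argument, and your implicit complement $D=\{d:d+C\subset mA\}$ together with the two-pivot case split (at $0$ and at $\max C$) replaces the paper's explicit three-piece complement in Lemma~\ref{na}; the latter carries an auxiliary hypothesis on $\max\Delta(B)$ which your organization happens to avoid, but the underlying mechanism is the same.
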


We first state and prove a technical auxiliary lemma which will turn useful again later.

\smallskip
\begin{lemma}\label{na}
Let $A\in\Pfz$ satisfy $\gcd(A)=1$. Let  $S=\langle A\rangle$, and $T=\langle \rev(A)\rangle$.
Then for all $B\in\Pfze(S)\cap\rev(\Pfze(T))$ and all $N\geq n^*(A)$ with
$\max\Delta(B)< N\max A-\mathsf{F} (S) -\mathsf{F} (T) -\max B$, we have $B \t NA$.
\end{lemma}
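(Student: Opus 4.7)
Set $a := \max A$ and $b := \max B$. Since $N \geq n^*(A)$, Lemma~\ref{4.1} gives $NA = S \cap (Na - T)$. The natural candidate for a divisor of $NA$ complementing $B$ is
\[
C := S \cap (Na - b - T),
\]
and from the hypothesis, $Na - b > \mathsf{F}(S) + \mathsf{F}(T) + \max\Delta(B) \geq \mathsf{F}(T)$, so $Na - b \in T$, whence $0 \in C$ and $C \in \Pfz$. I aim to show $B + C = NA$, which gives $B \t NA$.

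The inclusion $B + C \subseteq NA$ is immediate. For $b' \in B$ and $c \in C$ one has $b' + c \in S$ and
\[
Na - (b' + c) = (Na - b - c) + (b - b') \in T + \rev(B) \subseteq T,
\]
using $\rev(B) \subseteq T$ (from $B \in \rev(\Pfze(T))$) and the closure of $T$ under addition. Hence $b' + c \in S \cap (Na - T) = NA$.

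For the reverse inclusion $NA \subseteq B + C$, I proceed element-wise. Given $x \in NA$, the task is to find $b' \in B$ with $c := x - b' \in C$, equivalently with $x - b' \in S$ and $Na - b - x + b' \in T$. A sufficient condition is
\[
b' \in B \cap (\mathsf{F}(T) + b + x - Na,\; x - \mathsf{F}(S)),
\]
forcing $x - b' > \mathsf{F}(S)$ and $Na - b - x + b' > \mathsf{F}(T)$. Two easy cases cover the extremes: if $x < Na - b - \mathsf{F}(T)$, the choice $b' = 0$ works with $c = x$; if $x > b + \mathsf{F}(S)$, the choice $b' = b$ works with $c = x - b$ (noting $Na - x \in T$ since $x \in NA$).

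The main obstacle is the middle region $Na - b - \mathsf{F}(T) \leq x \leq b + \mathsf{F}(S)$, where neither $0$ nor $b$ lies in the target interval. Here the integers in $(\mathsf{F}(T) + b + x - Na, x - \mathsf{F}(S))$ form a contiguous block of exactly $Na - \mathsf{F}(S) - \mathsf{F}(T) - b - 1$ elements, which by hypothesis is $\geq \max\Delta(B)$, and this block lies inside $[0, b]$. Since any such block of cardinality $\geq \max\Delta(B)$ inside $[0, b]$ must meet $B$ (else it would fit inside an open gap $(b_i, b_{i+1})$ between consecutive elements of $B$, which contains at most $\max\Delta(B) - 1$ integers), the required $b'$ exists, completing the argument. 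The size hypothesis on $\max\Delta(B)$ is calibrated precisely so that this counting step succeeds.
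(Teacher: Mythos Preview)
Your proof is correct and follows essentially the same approach as the paper. Both define the same complement $C$ --- the paper writes it explicitly as $F\cup(\mathsf F(S),aN-\mathsf F(T)-b)\cup(aN-b-G)$ with $F=[0,\mathsf F(S)]\cap S$ and $G=[0,\mathsf F(T)]\cap T$, which coincides with your $S\cap(Na-b-T)$ --- and verify $B+C=NA$; your element-wise case analysis on $x\in NA$ replaces the paper's piece-by-piece check, but the substance is the same.
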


\begin{proof}
Let $B\in\Pfze(S)\cap\rev(\Pfze(T))$, $b=\max B$, and  $N\geq n^*(A)$ with
$\max\Delta(B)< N\max A-\mathsf{F}(S)-\mathsf{F}(T)-\max B$.
Setting $F=[0,\mathsf F (S)]\cap S$, $G=[0,\mathsf F (T)]\cap T$, and $a = \max A$, we observe that, by Lemma \ref{4.1} and the assumption $\gcd(A)=1$,
\begin{equation*}
NA=F\cup (\mathsf F (S),aN-\mathsf F (T))\cup aN-G
\end{equation*}
and we consider the set
\begin{equation*}
C=F\cup (\mathsf F (S),aN-\mathsf F (T)-b)\cup aN-b-G \,.
\end{equation*}
Then $B+F\subset \langle A\rangle \cap [0,\mathsf F (S)+b]\subset NA$ because $aN-\mathsf F (T)>\mathsf F (S)+b$, and thus
 $F\subset B+F\subset NA$.
Furthermore, we have $B+(\mathsf F (S),aN-\mathsf F (T)-b)=(\mathsf F (S),aN-\mathsf F (T))$ because $aN-\mathsf F (T)-b> \max \Delta(B)$.
Finally, we obtain that $aN-G\subset B+aN-b-G$ and
\[
b-B+G=\rev(B)+G\subset \langle \rev (A)\rangle
\cap [0,b+\mathsf F (T)]\subset N\rev(A)=aN-NA \,,
\]
because $N$ is so large  that $B+aN-b-G\subset NA$. It follows that $B+C=NA$.
\end{proof}

\smallskip
\begin{proof}[Proof of Theorem \ref{4.2}]
If $H = \{ \{0\} \}$, then all claims hold true.  Suppose that  $H \ne \{ \{0\} \}$ and set $d=\gcd \bigcup_{B\in H}B$.
Then $H=\{d\cdot B:B \in H'\}$ where $H' \subset \mathcal P_{\fin, 0} (\N_0)$ is a divisor-closed submonoid with
$1=\gcd \bigcup_{B\in H'}B$.
Furthermore,  for any $A\in\Pfz$, we have $\{d\cdot B : B\in \LK A\RK\}=\LK d\cdot A\RK$
and for any submonoids $S,T$ of $\N_0$ we have
$$\{d\cdot B : B\in  \mathcal{P}_{\fin,0} (S)\cap \rev(\mathcal P_{\fin,0} (T))\}=\mathcal P_{\fin,0} ( d\cdot S)\cap \rev(\mathcal P_{\fin,0} (d\cdot T)).$$
Therefore, we may  suppose without restriction that $d=1$.
Then $S = \langle B : B\in H\rangle$ is a numerical monoid and so is $T = \langle \rev(B) : B\in H\rangle$, because $\gcd(B)=\gcd(\rev(B))$ for any $B\in\Pfz$.

We assert that there is $A\in H$ such that $\langle A\rangle=S$
and $\langle \rev(A)\rangle=T$.
To find $B\in H$ with $\langle B\rangle=S$,
we construct a sequence $B_0,B_1, \ldots$ of elements of $H$ as follows:
let $B_0=\{0\}$ and
assuming $B_0,\ldots B_i$ are constructed and satisfy $\langle B_0,\ldots,B_i\rangle\neq S$, we take $B_{i+1}\in H$ such that $B_{i+1}\not\subset \langle B_0,\ldots,B_i\rangle$.
This process terminates, since there are no infinite increasing sequences
of submonoids of $\N_0$.
Let $n \in \N$ satisfies $\langle B_0,\ldots,B_n\rangle = S$ and
$B = \langle B_0+\cdots+B_n$,
 then we have $\langle B\rangle =\langle B_0,\ldots,B_n\rangle =S$.
 By a similar
iterative construction as above, there exists $C\in H$
such that $\langle\rev(C)\rangle =T$.
Setting  $A=B+C \in H$ we infer that $\langle A\rangle=S$ and $\langle\rev(A)\rangle=T$.

Now we fix some $A\in H$ for which  $\langle A\rangle=S$ and $\langle\rev(A)\rangle=T$, in particular $\gcd(A)=1$. We have
$$
\LK A \RK\subset H \subset \mathcal P_{\fin,0} (S)\cap \rev(\mathcal P_{\fin,0} (T)) \,,
$$
whence it remains to prove that $\mathcal P_{\fin,0} (S)\cap \rev(\mathcal P_{\fin,0} (T))\subset \LK A\RK$.

Let $B\in \mathcal P_{\fin,0} (S)\cap \rev(\mathcal P_{\fin,0} (T))$. Because of Lemma \ref{na}, we know that $B\mid NA$ for $N$ large enough, whence $B\in \LK A\RK$.
\end{proof}

\smallskip
\begin{theorem}~ \label{4.3}
\begin{enumerate}
\item For any $A\in\Pfz$, we have $\LK A\RK=\Pfz$  if and only if $1\in A\cap\rev(A)$.

\item The only maximal divisor-closed submonoids of $\Pfz$ are $\Pfze(\N_0\setminus\{1\})$ and $\rev(\Pfze(\N_0\setminus\{1\}))$. More generally, given numerical monoids $S_1$ and $S_2$, the maximal divisor-closed submonoids of $\Pfze(S_1)\cap \rev(\Pfze(S_2))$ are precisely the monoids $\Pfze(S'_1)\cap \rev(\Pfze(S_2))$ and $\Pfze(S_1)\cap \rev(\Pfze(S'_2))$ where $S'_1,S'_2$ are maximal submonoids of $S_1,S_2$ respectively.

\item There is a descending chain of divisor-closed submonoids of $   \mathcal P_{\fin, 0} (\N_0)$ that does not become stationary.

\item Every ascending chain of divisor-closed submonoids of $   \mathcal P_{\fin, 0} (\N_0)$ becomes stationary.
\end{enumerate}
\end{theorem}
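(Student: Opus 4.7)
The plan is to use Theorem \ref{4.2}, which expresses every divisor-closed submonoid $H\subset\Pfz$ in the form $\Pfze(S)\cap\rev(\Pfze(T))$ with $S=\langle B\colon B\in H\rangle$ and $T=\langle\rev(B)\colon B\in H\rangle$. This attaches to each divisor-closed submonoid a canonical pair $(S_H,T_H)$ of submonoids of $\N_0$, and the assignment is inclusion-preserving in both directions: $H\subset H'$ forces $S_H\subset S_{H'}$ and $T_H\subset T_{H'}$, while conversely $S\subset S'$ and $T\subset T'$ yield $\Pfze(S)\cap\rev(\Pfze(T))\subset \Pfze(S')\cap\rev(\Pfze(T'))$. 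All four assertions will therefore be reduced to structural questions about submonoids of $\N_0$.

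For (1), applying Theorem \ref{4.2} to $H=\LK A\RK$ gives $(S_H,T_H)=(\langle A\rangle,\langle\rev(A)\rangle)$, whence $\LK A\RK=\Pfz$ iff $\langle A\rangle=\langle\rev(A)\rangle=\N_0$, and this in turn is equivalent to $1\in A\cap\rev(A)$, since $1$ cannot be written as a sum of two positive integers and can therefore belong to $\langle A\rangle$ only if $1\in A$. For (2), the unique maximal proper submonoid of $\N_0$ is $\N_0\setminus\{1\}$, so the maximal proper pairs below $(\N_0,\N_0)$ are $(\N_0\setminus\{1\},\N_0)$ and $(\N_0,\N_0\setminus\{1\})$, giving the two listed maximal submonoids; the same argument, now restricted to pairs $(S,T)$ with $S\subset S_1$ and $T\subset S_2$, yields the general assertion about maximal divisor-closed submonoids of $\Pfze(S_1)\cap\rev(\Pfze(S_2))$. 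For (3), consider $H_k=\Pfze(\langle 2,2k+1\rangle)$: each is divisor-closed in $\Pfz$ because a divisor $C$ of a set $B\in\Pfze(S)$ satisfies $C\subset C+D=B\subset S$ for the complementary factor $D\ni 0$, and the inclusions $H_{k+1}\subsetneq H_k$ are strict since $\{0,2k+1\}\in H_k\setminus H_{k+1}$.

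The substance lies in (4). Given an ascending chain $H_1\subset H_2\subset\cdots$, passing to the canonical pairs yields ascending chains $(S_n),(T_n)$ of submonoids of $\N_0$, and since $H_n$ is determined by $(S_n,T_n)$ via Theorem \ref{4.2}, it suffices to show that any ascending chain of submonoids of $\N_0$ stabilizes. The key observation is that $d_n:=\gcd(S_n)$ satisfies $d_{n+1}\mid d_n$, so the sequence $(d_n)$ stabilizes, say $d_n=d$ for $n\geq n_0$. Writing $S_n=d\cdot S'_n$ with $\gcd(S'_n)=1$ for $n\geq n_0$, each $S'_n$ is necessarily a numerical monoid, since any submonoid of $\N_0$ with $\gcd$ equal to $1$ is cofinite; hence $\abs{\N_0\setminus S'_n}$ is a nonincreasing sequence of nonnegative integers, which must eventually be constant. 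Thus $(S_n)$ stabilizes and, by the same reasoning, so does $(T_n)$; consequently, the chain $(H_n)$ stabilizes.
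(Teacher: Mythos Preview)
Your proof is correct and follows essentially the same strategy as the paper's: all four parts are reduced, via Theorem~\ref{4.2}, to the correspondence $H\leftrightarrow(S_H,T_H)$ between divisor-closed submonoids and pairs of submonoids of $\N_0$. The only cosmetic differences are that you use the descending chain $\Pfze(\langle 2,2k+1\rangle)$ in part~3 where the paper uses $\Pfze(\{0\}\cup\N_{\ge n})$, and you supply more detail than the paper on why ascending chains of submonoids of $\N_0$ stabilize in part~4; your treatment of part~2 is a bit more compressed than the paper's direct verification, but the underlying argument is the same.
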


\begin{proof}
1. In view of Theorem \ref{4.2}, we have $\LK A\RK=\Pfz$ if and only if $\langle A\rangle=\langle \rev(A)\rangle=\N_0$.

2. It suffices to prove the more general statement.
Let $S_1$ and $S_2$ be numerical monoids and let $H=\Pfze(S_1)\cap \rev(\Pfze(S_2))$.

First, we assert that $H'=\Pfze(S'_1)\cap \rev(\Pfze(S_2))$ is a maximal divisor-closed submonoid of $H$. Let $B\in H'$ such that $H'=\LK B\RK$.
Since $S'_1$ is a maximal submonoid of $S_1$, we infer that $S_1\setminus S'_1=\{a\}$, where $a$ is an atom of $S_1$.
If $A\in H\setminus H'$, then $a\in A$ and $\LK \{A\}\cup H'\RK$ contains $\LK A+B\RK$.
Since $\langle A+B\rangle=S_1$, Theorem \ref{4.2} implies that $\LK A+B\RK=H$.

Now we consider $H''=\Pfze(S_1)\cap \rev(\Pfze(S'_2))$. The case of $H'$ shows that
$\rev(H'')$ is a maximal divisor-closed submonoid of $\rev(H)$, whence $H''$ is a
maximal divisor-closed submonoid of $H$.

Let $H^* \subset H$ be any proper divisor-closed submonoid of $H$.
Since $H^*$ is a divisor-closed submonoid of $\Pfz$, Theorem \ref{4.2} implies that $H^*=\Pfze(S'_1)\cap\rev(\Pfze(S'_2))$ for some submonoids $S'1,S'_2\subset \N_0$. Since $H^*\subset H$, we infer that $S'_1,S'_2$ are submonoids of $S_1,S_2$ respectively.
If  $S'_i\neq S_i$ for both $i \in [1,2]$, then
$H^* \subset H$ is not maximal because $H^*\subsetneq \Pfze(S'_1)\cap \rev(\Pfze(S_2))\subsetneq H$.
But, $S'_i\neq S_i$ for some $i \in [1,2$], say $S'_1\neq S_1$ and $S'_2=S_2$.
Then there exists an atom $a\in S_1$ such that $S'_1\subset S_1\setminus\{a\}$.
If $S'_1\subsetneq S_1\setminus\{a\}$, then $H^*\subsetneq \Pfze(S_1\setminus\{a\})\cap \rev(\Pfze(S_2))\subsetneq H$, whence $H^*$ is not maximal.
Thus $S'_1 = S_1\setminus\{a\}$ and the claim follows.

3. For every $n \in \N$, we consider the numerical monoid $H_n = \{0\} \cup \N_{\ge n}$. Then $(H_n)_{n \ge 1}$ is a descending chain of numerical monoids that does not become stationary, and $\big( \mathcal P_{\fin, 0} (H_n) \big)_{n \ge 1}$ is a descending chain of divisor-closed submonoids of $\mathcal P_{\fin, 0} ( \N_0)$ that does not become stationary.

4. Let $(H_i)_{i\in\N}$ be an ascending chain of
divisor-closed submonoids of $\Pfz$. Let $S_i=\langle B : B\subset H_i\rangle$ and $T_i=\langle \rev(B) : B\subset H_i\rangle$. Then the sequences $(S_i)_{i\in\N}$
and $(T_i)_{i\in\N}$ are ascending chain of submonoids of $\N_0$, whence they both become stationary. Since $H_i=\Pfze(S_i)\cap\rev(\Pfze(T_i))$
by  Theorem \ref{4.2}, we infer that the sequence $(H_i)_{i\in\N}$ becomes stationary.
\end{proof}

\smallskip
Next we study algebraic properties of divisor-closed submonoids of $\mathcal P_{\fin,0} (S)$, where $S$ is a numerical monoid. Clearly, if a monoid is torsion-free, cancellative, locally finitely generated,  or transfer Krull, then the same is true for its divisor-closed submonoids. However, there are domains $D$, which are, for example,  not transfer Krull,  but whose divisor-closed submonoids $\LK a \RK$, for all $a \in D$, are transfer Krull (the domain $\Int ( \Z)$ has this property; for more we refer to \cite{Fr16a, Re14a, Re17a}). Furthermore, there are transfer homomorphisms from non-cancellative monoids to cancellative monoids (a transfer homomorphism from a not necessarily cancellative monoid of modules to a Krull monoid can be found in \cite{Ba-Sm21a}).

\smallskip
\begin{theorem} \label{4.4}
Let $S$ be a numerical monoid and let $\big\{ \{0\} \big\} \ne H \subset \mathcal P_{\fin,0} (S)$ be a divisor-closed submonoid. Then $H$ is neither  torsion-free, nor locally finitely generated. Moreover, there is no transfer homomorphism $\varphi \colon H \to B$, where $B$ is a cancellative monoid. In particular, $H$ is not transfer Krull.
\end{theorem}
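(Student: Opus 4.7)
The plan is to reduce all four assertions to explicit analyses carried out inside a divisor-closed submonoid of $H$ isomorphic to $\Pfze(\N_0)$. By Theorem \ref{4.2}, we may write $H = \LK A \RK = \Pfze(S') \cap \rev(\Pfze(T'))$ for some $A \in H$, where $S' = \langle A \rangle$ and $T' = \langle \rev A \rangle$. Because $H \neq \{\{0\}\}$, both $S'$ and $T'$ are nontrivial submonoids of $\N_0$, each cofinite in $d\N_0$ where $d = \gcd A$, so $S' \cap T'$ is a submonoid of $\N_0$ containing all sufficiently large multiples of $d$. Fixing some $c \in (S' \cap T') \setminus \{0\}$, the set $\{0, c\}$ lies in $H$, and its divisor-closed hull $H_0 := \LK \{0, c\} \RK$ is, via the rescaling $X \mapsto X/c$, isomorphic to $\LK \{0, 1\} \RK$, which equals $\Pfze(\N_0)$ by Theorem \ref{4.3}.

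For the first two assertions I would work inside $H_0$. To show $H$ is not torsion-free, I take $X = \{0, 1, 3, 4\}$ and $Y = \{0, 1, 2, 3, 4\}$; these are distinct elements of $\Pfze(\N_0)$ with $\langle X \rangle = \langle Y \rangle = \langle \rev X \rangle = \langle \rev Y \rangle = \N_0$ and $\max X = \max Y = 4$, so Lemma \ref{4.1} forces $nX = nY = [0, 4n]$ for all $n$ large enough; rescaling by $c$ then gives distinct elements $cX, cY \in H_0 \subset H$ with $n(cX) = n(cY)$. For the failure of local finite generation, I observe that the sets $\{0, kc\}$ with $k \in \N$ all lie in $H$ (as $kc \in S' \cap T'$) and are pairwise distinct atoms: a decomposition $\{0, kc\} = U + V$ with $U, V \neq \{0\}$ would force $\max U, \max V$ to be positive elements of $\{0, kc\}$ summing to $kc$, which is impossible. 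Hence $H = \LK A \RK$ has infinitely many atoms, so it is not finitely generated, and in particular not locally finitely generated.

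The main obstacle is the last assertion, ruling out any transfer homomorphism $\varphi : H \to R$ with $R$ cancellative. Assuming such a $\varphi$ exists, I would apply Corollary \ref{important} to $B = \{0, c\}$: for $n$ sufficiently large, $B + nB = B + (nB \setminus \{c\})$ while $nB \neq nB \setminus \{c\}$. Applying $\varphi$ and cancelling in $R$ gives $\varphi(nB) = \varphi(nB \setminus \{c\})$, so length preservation yields $\mathsf L_H(nB) = \mathsf L_H(nB \setminus \{c\})$. Clearly $n \in \mathsf L_H(nB)$ via the factorization $B^n$. The technical heart of the proof is to show $n \notin \mathsf L_H(nB \setminus \{c\})$: since $nB \setminus \{c\} \subset c\N_0$, every atomic divisor is itself a subset of $c\N_0$ and hence belongs to $H_0$, so after rescaling by $c$ any length-$n$ atomic factorization of $\{0, 2, 3, \ldots, n\}$ in $\Pfze(\N_0)$ would consist of $n$ atoms whose maxima are positive integers summing to $n$, forcing each maximum to equal $1$ and hence each atom to equal $\{0, 1\}$; but the resulting sum is $[0, n]$, not $\{0, 2, 3, \ldots, n\}$. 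This contradiction rules out $\varphi$, and since every Krull monoid is cancellative, it simultaneously rules out transfer to any Krull monoid, so $H$ is not transfer Krull.
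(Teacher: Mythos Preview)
Your proof is correct. The reductions via Theorem~\ref{4.2} and the passage to $H_0 = \LK\{0,c\}\RK \cong \Pfze(\N_0)$ are sound, and each of the three verifications goes through: the pairs $\{0,kc\}$ are indeed infinitely many atoms of $H = \LK A\RK$; the sets $cX$ and $cY$ with $X=\{0,1,3,4\}$, $Y=[0,4]$ witness the failure of torsion-freeness via Lemma~\ref{4.1}; and your length argument for $nB\setminus\{c\}$ correctly excludes $n$ from $\mathsf L_H(nB\setminus\{c\})$, since divisors of an element of $c\N_0$ in $H$ lie in the divisor-closed submonoid $H_0$, so the factorization analysis may be carried out in $\Pfze(\N_0)$.

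The first two parts match the paper's approach in spirit (infinitely many pair-atoms; two distinct sets with coinciding iterated sumsets), only with different concrete witnesses. The genuine divergence is in ruling out a transfer homomorphism to a cancellative monoid. The paper exhibits, inside $H$, sets $B,C,D,F$ with $C=\{0,2dn^*\}$ and $D=\{0,dn^*\}$ atoms and $F=B+C=B+D+D$; cancelling $\varphi(B)$ yields $\varphi(C)=\varphi(D)+\varphi(D)$, which immediately contradicts $\varphi(C)\in\mathcal A(R)$. Your route instead leverages Corollary~\ref{important} to obtain $\varphi(nB)=\varphi(nB\setminus\{c\})$ and then argues via length preservation that $n\in\mathsf L_H(nB)\setminus\mathsf L_H(nB\setminus\{c\})$. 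Both work; the paper's argument is shorter and needs only the atom-preservation property of transfer homomorphisms, while yours invokes the stronger length-preservation property together with an additional combinatorial step, but has the virtue of reusing Corollary~\ref{important} rather than constructing a new identity.
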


\begin{proof}
Since $\mathcal P_{\fin,0} (S) \subset \mathcal P_{\fin,0} (\N_0)$ is a divisor-closed submonoid, $H$ is a divisor-closed submonoid of $\mathcal P_{\fin,0} (\N_0)$.
By Theorem \ref{4.2}, there are submonoids $S,T$ of $\N_0$ with $d = \gcd(S)=\gcd(T)$
such that $H=\mathcal P_{\fin,0} (S)\cap \rev(\mathcal P_{\fin,0} (T))$.
There exists $n^* \in \N$ such that $d\cdot \N_{\geq n^*}\subset S\cap T$.
Therefore, the set $F_n=\{0\}\cup d\cdot [n,5n]\cup \{6dn\}$
belongs to $H$ for any $n\geq n^*$.
Similarly, the sets $B_n=\{0\}\cup d\cdot [n,3n]\cup \{4dn\}$,
$D_n=\{0,dn\}$
and $C_n=\{0,2dn\}$ belong to $H$.
We denote by $B,C,D,F$ the sets $B_{n^*},C_{n^*},D_{n^*},F_{n^*}$, respectively.

(i) In order to show that $H$ is not finitely generated, we need to verify that it contains infinitely many atoms.
It suffices to observe that $C_n$ is an atom (like every pair) and belongs to $H$ for every $n\geq n^*$.

(ii) Assume to the contrary that there is a transfer homomorphism $\varphi \colon \LK A \RK \to M$, where $M$ is an additively written cancellative monoid. Observe that
the sets defined above satisfy $F=B+C=B+D+D$.
Then $\varphi (B) + \varphi (C) = \varphi (B+C)= \varphi (B+D+D) = \varphi (B)+\varphi (D)+ \varphi (D)$. Since $M$ is cancellative, we obtain that $\varphi (C) = \varphi (D) + \varphi (D)$. Since $\varphi$ is a transfer homomorphism, $\varphi (C)$ is an atom of $M$ and $\varphi (D)$ is not invertible in $M$, a contradiction.

(iii) Since the set $A=B\setminus\{2dn^*\}$ belongs to $H$ and $A+A=B+B=\{0\}\cup d\cdot [n,7n]\cup \{8dn\}$, the monoid $H$ is not torsion-free.
\end{proof}

If $S$ and $S'$ are numerical monoids such that $\mathcal P_{\fin} (S)$ and $\mathcal P_{\fin}(S')$ are isomorphic, then $S=S'$ by Theorem \ref{3.2}. We continue with the following conjecture.

\smallskip
\begin{conjecture} \label{4.5}
Let $S$ and $S'$ be numerical monoids. If $\mathcal P_{\fin, 0} (S)$ and $\mathcal P_{\fin, 0}(S')$ are isomorphic, then $S=S'$.
\end{conjecture}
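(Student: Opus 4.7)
The plan is to derive the conjecture as a direct corollary of the Grothendieck group computation established in the theorem preceding Proposition~\ref{3.3}. The key observation, implicit in that theorem's proof but not highlighted in its statement, is that the theorem provides a specific isomorphism $\phi\colon \gp(\Pfze(S)) \to \Z$ sending $[A,B]$ to $\max(A)-\max(B)$. Under $\phi$, the canonical monoid homomorphism $\iota\colon \Pfze(S) \to \gp(\Pfze(S))$ becomes $A\mapsto \max(A)$, and its image equals $\max(\Pfze(S)) = S$ as a subset of $\N_0\subset\Z$ (since $\max(\{0,s\})=s$ for every $s\in S$, while $\max(A)\in A\subset S$ for every $A\in\Pfze(S)$). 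Consequently the pair $\bigl(\gp(\Pfze(S)),\,\iota(\Pfze(S))\bigr)$, regarded as a group together with a distinguished submonoid, is isomorphic to $(\Z, S)$; and this pair is a functorial invariant of $\Pfze(S)$.

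The next step is to exploit functoriality. A monoid isomorphism $\varphi\colon \Pfze(S)\to \Pfze(S')$ induces a group isomorphism $\gp(\varphi)$ satisfying $\gp(\varphi)\circ \iota = \iota'\circ \varphi$, so $\gp(\varphi)$ restricts to a bijection from $\iota(\Pfze(S))$ onto $\iota'(\Pfze(S'))$. After identifying both Grothendieck groups with $\Z$ via the isomorphisms $\phi$ and $\phi'$, the map $\gp(\varphi)$ becomes an automorphism of $\Z$, hence $\pm\mathrm{id}_\Z$, and it carries $S$ setwise onto $S'$.

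Finally, the sign $-\mathrm{id}_\Z$ is easily ruled out: if it held, then $\max(\varphi(A)) = -\max(A)$ for every $A\in\Pfze(S)$, and non-negativity of both sides would force $\max(A)=0$, i.e.\ $A=\{0\}$, contradicting the non-triviality of $\Pfze(S)$. Thus $\gp(\varphi)=\mathrm{id}_\Z$ and $S = S'$ as subsets of $\Z$. No real obstacle arises: the conjecture follows directly from the preceding theorem once one observes that the image of the canonical map $\iota$ inside the Grothendieck group recovers $S$ itself, and that the only numerical monoid obtained from $S$ by an automorphism of $\Z$ is $S$.
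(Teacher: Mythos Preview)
Your argument is correct, and it actually proves what the paper leaves open. The paper does not prove Conjecture~\ref{4.5}; it offers only partial evidence via Theorem~\ref{4.8} (an isomorphism $\Pfze(S)\cong\Pfze(S')$ forces $\abs{\mathcal A(S)}=\abs{\mathcal A(S')}$ and $m(S)=m(S')$, so only finitely many candidates for $S'$ exist) and records in a note added in proof that Tringali and Yan later settled the question.

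Your route is entirely different from the paper's partial approach, which relies on the classification of divisor-closed submonoids (Theorems~\ref{4.2} and~\ref{4.3}) and on divisor-count asymptotics (Lemma~\ref{4.7}). You instead extract the full conclusion directly from the Grothendieck-group computation in Section~\ref{3}: under the explicit isomorphism $\gp(\Pfze(S))\to\Z$ given by $[A,B]\mapsto\max A-\max B$, the canonical image $\iota(\Pfze(S))$ is exactly $S\subset\Z$, so the pair $\big(\gp(\Pfze(S)),\iota(\Pfze(S))\big)$ is a functorial invariant isomorphic to $(\Z,S)$. Since $\mathrm{Aut}(\Z)=\{\pm\mathrm{id}_{\Z}\}$ and $-\mathrm{id}_{\Z}$ cannot carry one numerical monoid onto another, $S=S'$ follows. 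This is considerably more elementary than the paper's machinery; the authors computed $\gp(\Pfze(S))$ without noticing that the image of $\iota$ already recovers $S$ on the nose.
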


Let $S$ be a numerical monoid. Since $S$ is cancellative but divisor-closed submonoids of $\Pfze(S)$ are not cancellative,
$S$ is not isomorphic to any divisor-closed submonoid of $\Pfze(S)$, whence $S$ cannot be retrieved so easily from $\mathcal P_{\fin, 0} (S)$.
Our next result
supports Conjecture \ref{4.5}.
 For a numerical monoid $S$, let $m(S)$ be the smallest integer $m\ge \max \mathcal{A}(S)$ such that $m$ and $m-1$ lie in $S$.

 \smallskip
\begin{theorem} \label{4.8}
Let $S$ be a  numerical monoid.
\begin{enumerate}
\item If $S'$ is a numerical monoid with  $\Pfze(S')$ being isomorphic to $\Pfze(S)$, then $\abs{\mathcal{A}(S)}=\abs{\mathcal{A}(S')}$ and $m(S)=m(S')$.

\item There are only finitely many numerical monoids $S'$ with $\Pfze(S')$ being isomorphic to $\Pfze(S)$.
\end{enumerate}
\end{theorem}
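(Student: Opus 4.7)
The plan is to extract $|\mathcal{A}(S)|$ and $m(S)$ as purely algebraic invariants of $\Pfze(S)$; part (2) will then be a simple counting corollary.

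For $|\mathcal{A}(S)|=|\mathcal{A}(S')|$, I would apply Theorem~\ref{4.3}(2) to the canonical decomposition $\Pfze(S)=\Pfze(S)\cap\rev(\Pfze(\N_0))$ furnished by Theorem~\ref{4.2} (the second factor is $\N_0$ because any $\{0,n,n+1\}\in\Pfze(S)$ with $n$ large enough contributes $1$ to $\langle\rev(B):B\in\Pfze(S)\rangle$). The maximal divisor-closed submonoids of $\Pfze(S)$ are then precisely $\Pfze(S\setminus\{a\})$ for $a\in\mathcal{A}(S)$ (coming from the maximal submonoids of $S$) together with the distinguished submonoid $M_S:=\Pfze(S)\cap\rev(\Pfze(\N_0\setminus\{1\}))$ arising from the unique atom $1$ of $\N_0$. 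Pairwise distinctness is witnessed by the doubleton $\{0,a\}$, which lies in $M_S$ (because $a\ge 2$) but not in $\Pfze(S\setminus\{a\})$. Since any monoid isomorphism induces a bijection between the lattices of divisor-closed submonoids, $|\mathcal{A}(S)|+1=|\mathcal{A}(S')|+1$.

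The delicate half is $m(S)=m(S')$. The strategy is to identify $M_S$ algebraically and then detect the consecutive pairs $k-1,k\in S$ via the atoms $\{0,k-1,k\}\in\Pfze(S)\setminus M_S$. Iterating Theorem~\ref{4.3}(2), $M_S$ has $|\mathcal{A}(S)|+2$ maximal divisor-closed submonoids (using $|\mathcal{A}(\N_0\setminus\{1\})|=|\mathcal{A}(\langle 2,3\rangle)|=2$), whereas $\Pfze(S\setminus\{a\})$ has $|\mathcal{A}(S\setminus\{a\})|+1$; in the generic case this alone pins down $M_S$. Once $M_S$ is recognized, an atom $A$ lies outside $M_S$ iff $\max A-1\in A$, and the three-element atoms outside $M_S$ are exactly the $\{0,k-1,k\}$ with $k-1,k\in S$. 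These three-element atoms can be singled out by combining the requirement that $\LK A\RK$ has exactly three maximal divisor-closed submonoids with a further lattice-theoretic minimality condition; their indexing yields the set $\{k:k-1,k\in S\}$, and combined with the algebraic reconstruction of $\max\mathcal{A}(S)$ from the multiset $\{|\mathcal{A}(S\setminus\{a\})|:a\in\mathcal{A}(S)\}$ one obtains $m(S)$. The main obstacle is the degenerate case $|\mathcal{A}(S\setminus\{a\})|=|\mathcal{A}(S)|+1$ for some $a$, in which the coatom count alone does not isolate $M_S$; here one descends one further level and exploits the two ``reversal-side'' coatoms of $M_S$ of the form $\Pfze(S)\cap\rev(\Pfze(\N_0\setminus\{1,j\}))$ for $j\in\{2,3\}$, whose further coatom counts $|\mathcal{A}(S)|+3$ and $|\mathcal{A}(S)|+2$ produce a pattern that no $\Pfze(S\setminus\{a\})$ can realise.

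For part (2), from $m(S')=m(S)$ we obtain $\max\mathcal{A}(S')\le m(S')=m(S)$, so $\mathcal{A}(S')\subset[2,m(S)]$; combined with $|\mathcal{A}(S')|=|\mathcal{A}(S)|$ this leaves at most $\binom{m(S)-1}{|\mathcal{A}(S)|}$ possibilities for $\mathcal{A}(S')$, each determining a unique numerical monoid $S'=\langle\mathcal{A}(S')\rangle$.
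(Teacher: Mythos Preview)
Your argument for $|\mathcal{A}(S)|=|\mathcal{A}(S')|$ is correct and is exactly the paper's argument: count the maximal divisor-closed submonoids via Theorem~\ref{4.3}(2).

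The problem is your approach to $m(S)=m(S')$, which diverges completely from the paper and has a genuine gap. Suppose you succeed in identifying $M_S$ algebraically (your treatment of the degenerate case is only sketched, but set that aside), and hence in singling out the family of atoms $A_k=\{0,k-1,k\}$ with $k-1,k\in S$. An isomorphism $\phi\colon\Pfze(S)\to\Pfze(S')$ then carries this family bijectively to the corresponding family in $\Pfze(S')$. But nothing in the lattice of divisor-closed submonoids tells you the \emph{numerical value} $k$ attached to $A_k$; a priori $\phi$ could send $A_k$ to $A_{k'}$ with $k'\neq k$. So your phrase ``their indexing yields the set $\{k:k-1,k\in S\}$'' is not justified, and neither is the claimed reconstruction of $\max\mathcal{A}(S)$ from the multiset $\{|\mathcal{A}(S\setminus\{a\})|:a\in\mathcal{A}(S)\}$. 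Without a mechanism to convert algebraic data into actual integers, you cannot conclude $m(S)=m(S')$.

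This is precisely the step where the paper brings in a tool you do not use: Lemma~\ref{4.7}, which says that $\log\tau_S(nA)\sim n\max(A)/\gcd(A)$. Since divisor counts are preserved by any monoid isomorphism, this lets one read off $\max(A)$ (when $\gcd(A)=1$) as an isomorphism invariant. The paper then observes that $\LK A\RK=\Pfze(S)$ if and only if $\mathcal{A}(S)\subset A$ and $1\in\rev(A)$, so the minimum of $\max(A)$ over all such $A$ is exactly $m(S)$; applying the asymptotic to such a minimal $A$ and to its image under $\phi$ gives $m(S)=\max(A)=\max(\phi(A))\ge m(S')$, and symmetry finishes. Your purely lattice-theoretic route lacks any analogue of this numerical extraction step, and I do not see how to close the gap without something like Lemma~\ref{4.7}.

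Your Part~(2) is fine once Part~(1) is established, and matches the paper.
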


We need a lemma before embarking on the proof of Theorem \ref{4.8}. For a numerical monoid $S$ and for $A \in \mathcal P_{\fin} (S)$, let $\tau_S (A)$ denote the number of divisors of $A$ in $\mathcal P_{\fin} (S)$. If $A \in \mathcal P_{\fin,0} (S)$, then $\tau_S (A)$ equals the number of divisors of $A$ in $\mathcal P_{\fin, 0} (S)$. If $S = \N_0$, then we set $\tau (A) = \tau_{\N_0} (A)$ and any divisor of $A$ in $\Pfz$ is actually a divisor of $A$ in
$P_{\fin,0} (\langle A\rangle)\cap \rev(\mathcal P_{\fin,0} (\langle\rev(A)\rangle))$.
In what follows $\log$ denotes the logarithm of base 2.

\smallskip
\begin{lemma} \label{4.7}
Let $S$ be a numerical monoid and let $\{0\}\neq  A\in\mathcal P_{\fin, 0} (S)$.
Then $\log \big(\tau_S(nA) \big)$ is asymptotic to $n\max(A)/ \gcd (A)$ as $n$ tends to infinity.
\end{lemma}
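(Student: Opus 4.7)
The plan is to derive matching upper and lower bounds on $\log\tau_S(nA)$, using Lemma \ref{4.1} for the structure of $nA$ and Lemma \ref{na} to produce many divisors. First I would perform two reductions. Since every divisor $B$ of $nA$ satisfies $B\subset B+C=nA\subset\langle A\rangle$, one may replace $S$ by $\langle A\rangle$. Next, let $d=\gcd(A)$ and write $A=d\cdot A'$ with $\gcd(A')=1$: the dilation $X\mapsto d\cdot X$ is a monoid isomorphism $\Pfze(\langle A'\rangle)\to\Pfze(\langle A\rangle)$ mapping divisors of $nA'$ bijectively to divisors of $nA=d\cdot(nA')$, so $\tau_S(nA)=\tau_{\langle A'\rangle}(nA')$ and it suffices to treat the case $\gcd(A)=1$. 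Setting $S=\langle A\rangle$, $T=\langle\rev(A)\rangle$, $a=\max A$, $F=\mathsf F(S)$, $G=\mathsf F(T)$, Lemma \ref{4.1} gives, for $n$ large, $nA=[0,na]\setminus(G_S\cup(na-G_T))$ where $G_S,G_T$ denote the (finite) gap sets of $S,T$, so $|nA|=na+O(1)$; the crude bound $\tau_S(nA)\le 2^{|nA|}$ then yields $\log\tau_S(nA)\le na+O(1)$.

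For the lower bound, I would invoke Lemma \ref{na}: any $B\in\Pfze(S)\cap\rev(\Pfze(T))$ with $\max B+\max\Delta(B)<na-F-G$ divides $nA$ (once $n\ge n^*(A)$). Fix a parameter $K\in\N$ and let $m$ be the largest element of $S\cap T$ with $m\le na-F-G-K$ (so $m=na-F-G-K$ for $n$ large, as $S\cap T$ contains all sufficiently large integers); I would count subsets $B\subset S\cap(m-T)$ with $0,m\in B$ and $\max\Delta(B)\le K-1$. Every such $B$ lies in $\Pfze(S)\cap\rev(\Pfze(T))$, has $\max B=m$, and therefore divides $nA$ by Lemma \ref{na}. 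The host set $S\cap(m-T)=[0,m]\setminus(G_S\cup(m-G_T))$ is an almost-interval with a bounded number of forbidden positions near $0$ and $m$, and the number of subsets of $[0,m]$ containing $\{0,m\}$ with maximum gap $\le K-1$ equals the number of compositions of $m$ into parts from $\{1,\dots,K\}$, which satisfies the linear recurrence $f_K(m)=f_K(m-1)+f_K(m-2)+\cdots+f_K(m-K)$ with dominant root $\rho_K\in(1,2)$, the unique positive root of $x^K=x^{K-1}+\cdots+1$, obeying $\rho_K=2-2^{-K}+O(2^{-2K})$ and hence $\rho_K\to 2$ as $K\to\infty$. The bounded forbidden positions cost only an $O(1)$ multiplicative factor, so the count is at least $\rho_K^{m-O(1)}$, giving $\log\tau_S(nA)\ge(\log\rho_K)(na-F-G-K+O(1))$.

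Taking $K=K(n)\to\infty$ slowly with $K(n)=o(n)$ (for example $K(n)=\lfloor\log n\rfloor$), we obtain $\log\rho_K\to 1$, and therefore $\log\tau_S(nA)/(na)\to 1$, which combined with the upper bound yields $\log\tau_S(nA)\sim na=n\max(A)/\gcd(A)$. The main obstacle in this plan is the lower bound: one must exhibit divisors in such abundance as to almost saturate the entropy $|nA|\sim na$ of the host set; the key idea is to sacrifice an additive $K$ in the allowed value of $\max B$ (so that Lemma \ref{na} remains applicable) in exchange for imposing the gap constraint $\max\Delta(B)<K$, after which the remaining combinatorial freedom in choosing $B$ has rate $\rho_K$ approaching $2$.
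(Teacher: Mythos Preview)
Your proposal is correct, and it shares the overall architecture with the paper's proof: the upper bound is the trivial $\tau_S(nA)\le 2^{|nA|}$, and the lower bound comes from Lemma~\ref{na}, exhibiting many sets $B\in\Pfze(\langle A\rangle)\cap\rev(\Pfze(\langle\rev(A)\rangle))$ with small $\max\Delta(B)$ that must divide $nA$. Your initial reduction to $S=\langle A\rangle$ (using that any divisor of $nA$ is a subset of $nA\subset\langle A\rangle\subset S$) is in fact cleaner than the paper's, which treats $S=\N_0$ first and then corrects by the genus $\mathsf g(S)$.

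Where you genuinely diverge is in the lower-bound count. The paper sets $b=\max B=na-2\log n$, so that the gap constraint from Lemma~\ref{na} becomes $\max\Delta(B)\lesssim 2\log n$; it then takes $B=\{0\}\cup D\cup\{b\}$ with $D$ ranging over \emph{all} subsets of the free interval $(\mathsf F(\langle A\rangle),\,b-\mathsf F(\langle\rev(A)\rangle))$, and simply subtracts the at most $b\cdot 2^{2b-na+O(1)}\ll 2^{na}/n^3$ sets with a forbidden long gap. This one-step inclusion--exclusion yields $\tau_S(nA)\gg 2^{na}/n^2$ directly, hence $\log\tau_S(nA)=na+O(\log n)$. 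Your route---counting compositions of $m$ into bounded parts, invoking the Perron root $\rho_K\to 2$, and letting $K=K(n)\to\infty$ slowly---works as well, but it trades one elementary subtraction for a double limit and a transfer-matrix asymptotic. What you gain is a conceptually clean ``rate'' interpretation; what you lose is the explicit $O(\log n)$ error and some brevity.

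Two small cosmetic points: your gap condition $\max\Delta(B)\le K-1$ corresponds to compositions with parts in $\{1,\dots,K-1\}$, not $\{1,\dots,K\}$ (harmless for the limit), and the claim that the boundedly many forbidden positions near $0$ and $m$ cost only an $O(1)$ factor is correct but deserves one sentence of justification---e.g.\ once $K$ exceeds both Frobenius numbers, fixing $B$ to contain $\mathsf F(\langle A\rangle)+1$ and $m-\mathsf F(\langle\rev(A)\rangle)-1$ reduces the count to compositions of an interval of length $m-O(1)$.
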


\begin{proof}
We first handle the case $S = \N_0$. Let $A=d\cdot A'$ with $d = \gcd (A)$. Since the set of divisors of $A$ is the set of elements
$d\cdot B$ where $B$ is a divisor of $A'$, and since $\max(A')=\max(A)/d$, we
may assume that $d=1$ and $A=A'$.
On the one hand, every divisor of $nA$ in $\Pfz$ is a subset of $nA$, whence $\tau(A)\leq 2^{n\max(A)}$.
Now let $N \ge n^* (A)$ be large enough so that we can apply Lemma \ref{4.1}.
Let $b=N\max{A}-2\log N$. Taking $N$ large enough, we have $b>r+q$, where
$r=\mathsf{F}(\langle A\rangle)$ and $q=\mathsf{F}(\langle \rev(A)\rangle)$.
Let $D$ be a subset of $(r, b-q)$ and $B=\{0\}\cup D\cup\{b\}$, so $B\subset \langle A\rangle $ and $\rev(B)\subset \langle \rev(A)\rangle $.
Assume that $\max\Delta(B)<aN-q-b-r$.
Then by Lemma \ref{na}, $B$ is a divisor of $NA$.
Now we can estimate the number of divisors of $NA$.
There are $2^{b-q-r-1}\gg 2^{N\max A}/N^2$ subsets of $(r, b-q)$.
There are at most $b2^{b-(aN-q-b-r)}\ll 2^{N\max A}/N^3$ subsets of $[0,b]$ for which $\max\Delta(B)>aN-q-b-r$.
Therefore, there are $\gg 2^{N\max A}/N^2$ divisors of $NA$ as desired.

Now, let $S$ be a numerical monoid. Of course $\tau_S(A)\leq \tau(A)$.
Let $A=d\cdot A'$ with $d = \gcd (A)$. On the other hand $\tau_S(A)\geq \tau_S(A')$ since $d\cdot S\subset S$. Thus,  we may assume that $d=1$.
If $\mathsf g (S) = |\N_0 \setminus S|$ denotes the genus of $S$, then
we see that in the proof above there are at least
$2^{b-q-r-1- \mathsf g (S)}\gg 2^{N\max A}/N^2$ choices for $D$ such that $B=\{0\}\cup D\cup\{b\}\subset S$.
So we still have the asymptotic $\log \big(\tau_S(nA) \big)\sim n\max(A)/d$.
\end{proof}

\begin{proof}[Proof of Theorem \ref{4.8}]
1. Let $S'$ be a numerical monoid such that $\Pfze(S)$ and $\Pfze(S')$ are isomorphic.

According to Theorem \ref{4.3}.2,
the maximal divisor-closed submonoids of $\Pfze(S)$ are the monoids
$\Pfze(T)$ for $T\subset S$ maximal and $\Pfze(S)\cap\rev(\Pfze(\N_0\setminus\{1\})$.
Since the maximal submonoids of $S$ are precisely the monoids $S \setminus\{a\}$ for $a\in\mathcal{A}(S)$, there are exactly $\abs{\mathcal{A}(S)}+1$ maximal
divisor-closed submonoids of $\Pfze(S)$. Since the number of maximal
divisor-closed submonoids is invariant
under monoid isomorphism, we have  $\abs{\mathcal{A}(S)}=\abs{\mathcal{A}(S')}$.

For the second assertion,
observe that $A\in \Pfze(S)$ satisfies
$\LK A\RK=\Pfze(S)$ if and only if $\mathcal{A}(S)\subset A$ and $1\in\rev(A)$.
Thus $\max(A)\ge m(S)$, equality being possible by taking
$A=\mathcal{A}(S)\cup\{m(S)-1,m(S)\}$.
Fix this $A$ and let $\phi \colon \Pfze(S)\rightarrow\Pfze(S')$ be an isomorphism.
Observe that $\LK\phi(A)\RK=\phi(\LK A\RK)=\Pfze(S')$, which implies
that $\max(\phi(A))\ge m(S')$.
Applying Lemma \ref{4.7}, and since $\tau(A)=\tau(\phi(A))$, we may infer
$m(S)=\max(A)=\max(\phi(A))\ge m(S')$.
By symmetry, we obtain that $m(S)= m(S')$.

2. There are only finitely many numerical monoids whose atoms are all smaller than a given constant. Thus, the claim follows from the first item.
\end{proof}

\begin{remark}
Theorem \ref{4.8} shows that  $\Pfz$ is not isomorphic to  $\Pfze(S')$ for any numerical monoid $S' \subsetneq \N_0$, and the similar statement is true for
$\Pfze(\N_0 \setminus\{1\})$.
However, Theorem \ref{4.8} does not allow us to distinguish between $\Pfze(\langle 2,5\rangle)$ and $\Pfze(\langle 4,5\rangle)$.
Nevertheless, we can show that they are not isomorphic.
Let us abbreviate the phrase ``maximal divisor-closed submonoid''
to MDCS (plural MDCSs).
We note that $\Pfze(\langle 2,5\rangle)$ has three MDCSs, namely $\Pfze(\langle 2,7\rangle)$, $\Pfze(\langle 4,5,6,7\rangle)$ and $\Pfze(\langle 2,5\rangle)\cap\rev(\Pfze(\N_0\setminus\{1\} ))$, which in turn have respectively 3,4, and 4 MDCSs. The monoid
$\Pfze(\langle 4,5\rangle)$ also has 3 MDCSs, which have respectively 5, 5 and 4 MDCSs. So $\Pfze(\langle 2,5\rangle)$ and $\Pfze(\langle 4,5\rangle)$ are not isomorphic.
It is conceivable that such an argument, with possibly arbitrarily many iterations of the search for MDCSs, might yield a solution to Conjecture
\ref{4.5}. This is related to the study of  numerical semigroup trees (\cite{MR2564064, MR3128711}).
\end{remark}

Lemma \ref{4.7} will also be the crucial tool when studying absolutely irreducible elements of  power monoids of numerical monoids.
Absolutely irreducible elements in rings of integers in algebraic number fields are classic objects of interest. Theorem \ref{4.2} shows that all divisor-closed submonoids of restricted power monoids of numerical monoids have the form $\LK A \RK$ for some finite nonempty subset $A$ of the numerical monoid, and Theorem \ref{4.5} yields that $\LK A \RK$ is not Krull.  Recent work shows that absolutely irreducible elements are rather abundant in monoids $H$ with the property that all divisor-closed submonoids of the form $\LK a \RK$, with $a \in H$,  are Krull (Krull monoids have this property, but also rings of integer valued polynomials; see \cite{An22a, Ri-Wi21a,Fr-Na20a,Fr-Na-Ri22a}). In contrast to this, our next result shows that, for any numerical monoid $S \subsetneq \N_0$,  $\mathcal P_{\fin} (S)$ and $\mathcal P_{\fin, 0} (S)$ have no absolutely irreducible elements at all.

\smallskip
\begin{theorem} \label{absIrr}
Let $S$ be a  numerical monoid.
The element $\{1\} \in \mathcal P_{\fin} ( \N_0)$ is absolutely irreducible. Apart from this case, there are no further  absolutely irreducible elements  neither in $\Pfze(S)$ nor in $\Pf(S)$.
\end{theorem}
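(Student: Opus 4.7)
My plan is as follows. The first assertion is immediate: Theorem \ref{3.1}.1 shows that $\{1\}$ is a cancellative prime element of $\Pfn$, and the general principle recorded in Section \ref{2} that cancellative prime elements are absolutely irreducible yields the claim. For the negative assertions, my strategy rests on the following elementary observation: if $A$ is an atom of a monoid $H$ with $|\mathsf Z(nA)|=1$ for every $n\in\N$, then the divisors of $nA$ in $H$ are exactly the $n+1$ pairwise distinct elements $\{0\},A,2A,\ldots,nA$. Indeed, if $B+C=nA$ and $z_B\in\mathsf Z(B)$, $z_C\in\mathsf Z(C)$, then $z_B+z_C$ is a factorization of $nA$, so by uniqueness $z_B+z_C=n\cdot A$ and therefore $B=kA$ for some $k\in[0,n]$; distinctness follows from $\max(kA)=k\max(A)>0$ for $k\ge 1$. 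Hence, to rule out absolute irreducibility of $A$, it suffices to produce strictly more than $n+1$ divisors of $nA$ for some $n$.

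For an atom $A\in\Pfze(S)$ with $|A|\ge 2$, Lemma \ref{4.7} directly supplies what I need: $\log\tau_S(nA)\sim n\max(A)/\gcd(A)$, which grows linearly in $n$ because $\gcd(A)\mid\max(A)$ and $\max(A)>0$; so $\tau_S(nA)$ grows exponentially and eventually dwarfs $n+1$. The same conclusion transfers to $\Pf(S)$ for such $A$ by a short ``minimum'' consideration: any factorization in $\Pf(S)$ of an element whose minimum is $0$ uses only atoms containing $0$, hence already lies in $\Pfze(S)$. A singleton atom $A=\{k\}\in\Pf(S)$ with $S\ne\N_0$ must satisfy $k\in\mathcal A(S)$, and since $|\mathcal A(S)|\ge 2$ I can pick $k'\in\mathcal A(S)\setminus\{k\}$: then $k'\cdot\{k\}$ and $k\cdot\{k'\}$ are distinct factorizations of $\{kk'\}$ (of respective lengths $k'$ and $k$), so $\{k\}$ is not absolutely irreducible.

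The remaining case is an atom $A\in\Pf(S)$ with $|A|\ge 2$ and $k:=\min(A)>0$, and here I plan a shifting trick. I set $A_0:=A-k\in\Pfz$ and observe that, for every divisor pair $B_0+C_0=nA_0$ in $\Pfz$ and every integer $j$ satisfying $\mathsf F(S)<j<nk-\mathsf F(S)$, the shifted sets $B:=B_0+j$ and $C:=C_0+(nk-j)$ both lie in $[\mathsf F(S)+1,\infty)\cap\N_0\subset S$; hence $B\in\Pf(S)$ is a divisor of $nA$. For fixed such $j$, distinct $B_0$ give distinct $B$, so Lemma \ref{4.7} applied to $A_0$ yields exponentially many divisors of $nA$ in $\Pf(S)$, contradicting the $n+1$ bound. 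The main obstacle is exactly this last case: Lemma \ref{4.7} is stated only for subsets containing $0$, and the shifting construction is the mechanism by which its exponential divisor estimate is transported into the non-normalized monoid $\Pf(S)$ without leaving the ambient numerical monoid.
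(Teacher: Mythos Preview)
Your proof is correct and follows essentially the same route as the paper: the absolute irreducibility of $\{1\}$ via its being a cancellative prime, the reduction to counting divisors of $nA$ against the $n+1$ bound, the appeal to Lemma~\ref{4.7} for exponential growth of $\tau$, the singleton case via a second atom of $S$, and a shifting trick for atoms $A\in\Pf(S)$ with $\min A>0$.

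The one place where your argument differs in execution is the final case. The paper shifts a divisor $B\subset S$ of $N(A-a)$ by the full amount $N\min(A)$ and invokes $\tau_S(N(A-a))$, which is slightly awkward because $A-a$ need not lie in $\Pfze(S)$ and the cofactor is not explicitly placed in $S$. You instead count divisors of $nA_0$ in $\Pfz$ (so Lemma~\ref{4.7} applies verbatim with $S=\N_0$) and split the total shift $nk$ as $j$ and $nk-j$ with $\mathsf F(S)<j<nk-\mathsf F(S)$, which transparently forces both $B_0+j$ and $C_0+(nk-j)$ into $[\mathsf F(S)+1,\infty)\subset S$. This is a clean way to transport the exponential divisor estimate into $\Pf(S)$ and is arguably tidier than the paper's version, though the underlying idea is the same.
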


\begin{proof}
Since $\{1\} \in \mathcal P_{\fin} (\N_0)$ is a cancellative prime element by Theorem \ref{3.1}, it is absolutely irreducible.

Let $A\in \Pfz$ be irreducible with $\gcd (A)=d$.
Then, by Lemma \ref{4.7},  $\log \tau_S (NA)$ is asymptotic to $N\max(A)/d$. This implies, for $N$ large enough, that $NA$ has divisors in $\Pfze(S)$ that are different from $kA$ with $k \in [0, N]$, whence $|\mathsf Z (NA)|>1$.

Now let $A \in \mathcal P_{\fin} (S)$ be irreducible. We distinguish two cases. Suppose that $|A|=1$. If $S=\N_0$, then $A = \{1\}$, and we are back to the case handled above. Thus, suppose that  $S \ne \N_0$. Then $A = \{k\}$ for some $k \in \mathcal A (S)$, and there is $k' \in \mathcal A (S) \setminus \{k\}$. Then $\{kk'\} = k' \{k\} = k \{k'\}$, whence $A$ is not absolutely irreducible.

Now suppose that $|A| \ge 2$ and set $a=\min A$. Then $A-a\in\Pfz$ (but not necessarily in $\Pfze(S)$).
Then $\log\tau_S(N(A-a))$ is asymptotic to $n \big( \max (A) - \min (A) \big)/\gcd(A-a)$ by Lemma \ref{4.7}.
Let $B\subset S$ be a divisor of $N(A-\min(A))$. Then $B+N\min (A)$ divides $NA$
and is again included in $S$.
This shows that $NA$ has  more divisors in $\Pf(S)$ than those implied
by the factorization $NA=A+\cdots+A$, whenever $N$ is large enough, whence $\abs{\mathsf{Z}(NA)}>1$.
\end{proof}

\smallskip
\section{On catenary degrees and  $\omega$-invariants} \label{5}
\smallskip

In this section, we study finer arithmetic invariants (namely, catenary degrees and $\omega$-invariants), which consider not only the lengths of factorizations but do consider factorizations in a more direct way. Nevertheless, their behavior also controls the structure of sets of lengths (see Remark \ref{5.3}).

To recall definitions, let $H$ be a multiplicatively written atomic monoid. For an element $a \in H$, let $\omega (H,a)$ denote the smallest $N \in \N_0 \cup \{\infty\}$ with the following property:
\begin{itemize}
\item[] For all $n \in \N$ and $a_1, \ldots, a_n \in H$, if $a$ divides $a_1 \cdot \ldots \cdot a_n$, then there is a subset $\Omega \subset [1,n]$ with $|\Omega| \le N$ such that $a$ divides $\prod_{\lambda \in \Omega}a_{\lambda}$.
\end{itemize}
Thus, $a$ is a  prime element if and only if $\omega (H,a)=1$. For every $a \in H$, we have $\sup \mathsf L (a) \le \omega (H, a)$. In particular, if $\omega (H, a) < \infty$ for all $a \in H$, then all sets of lengths $L \in \mathcal L (H)$ are finite. We set $\omega (H) = \sup \{\omega (H,a) \colon a \in \mathcal A (H) \}$.

To define a distance function on the set of factorizations $\mathsf Z (H)$, let $z, z' \in \mathsf Z (H)$ be given, say
\[
z = u_1 \cdot \ldots \cdot u_k v_1 \cdot \ldots \cdot v_{\ell} \quad \text{and} \quad z' = u_1 \cdot \ldots \cdot u_k w_1 \cdot \ldots \cdot w_{m} \,,
\]
where $k, \ell, m \in \N_0$ and all $u_r, v_s, w_t \in \mathcal A (H_{\red})$ are such that $v_s$ and $w_t$ are pairwise distinct for all $s \in [1, \ell]$ and all $t \in [1,m]$. Then $\mathsf d (z,z')= \max \{\ell, m\} \in \N_0$ is the distance between $z$ and  $z'$, whence $\mathsf d (z,z')=0$ if and only if  $z=z'$. Let $a \in H$ and $M \in \N_0$. A finite sequence $z_0, \ldots, z_k \in \mathsf Z (a)$ is called an $M$-chain of factorizations if $\mathsf d (z_{i-1}, z_i) \le M$ for all $i \in [1,k]$. The catenary degree $\mathsf c (a)$ of $a$ is the smallest $M \in \N_0 \cup \{\infty\}$ such that any two factorizations of $a$ can be concatenated by an $M$-chain. Clearly, we have $\mathsf c (a) \le \sup \mathsf L (a)$. The {\it catenary degree} $\mathsf c (H)$ of $H$ is defined as the supremum over all $\mathsf c (a)$, whence
\[
\mathsf c (H) = \sup \{\mathsf c (a) \colon a \in H \} \,.
\]
If $\Delta (H) \ne \emptyset$, then $1 + \sup \Delta (H) \le \mathsf c (H)$ and if, in addition, $H$ is cancellative, then
\[
2 + \sup \Delta (H) \le \mathsf c (H) \quad \text{and} \quad \mathsf c (H) \le \omega (H) \,.
\]
If $H$ is finitely generated, then $\omega (H) < \infty$ by \cite[Proposition 3.4]{F-G-K-T17}. If $n \in \N_{\ge 2}$ and $H_n \subset \mathcal P_{\fin} (\N_0)$ is the submonoid generated by $\{0,1\}$ and by $\{1\} \cup (2 \cdot [0,n])$, then $\omega (H_n) = 2n+1$ (\cite[Remarks 3.11]{F-G-K-T17}). The next result shows, in particular, then $\omega \big( \mathcal P_{\fin}(\N_0) \big) = \infty$.

\smallskip
\begin{theorem} \label{5.1}
Let $S$ be a numerical monoid,
$a \in S$, and $A = \{0,a\}$. Then $\omega \big( \mathcal P_{\fin}(S) , A \big) = \omega \big( \mathcal P_{\fin, 0}(S) , A \big) = \infty$.
\end{theorem}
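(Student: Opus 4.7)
The plan is to produce, for each $N \in \N$, an explicit sequence $B_1, \ldots, B_{N+1} \in \mathcal P_{\fin, 0}(S)$ such that $A \mid B_1 + \cdots + B_{N+1}$ yet $A \nmid \sum_{i \in \Omega} B_i$ for every $\Omega \subsetneq [1, N+1]$; this will force $\omega(\mathcal P_{\fin, 0}(S), A) > N$ for all $N$, whence $\omega(\mathcal P_{\fin, 0}(S), A) = \infty$. The same sequence handles $\mathcal P_{\fin}(S)$ simultaneously: since each $C$ appearing below lies in $\mathcal P_{\fin, 0}(S)$ and $A$ contains $0$, any divisor $D$ of $C$ in $\mathcal P_{\fin}(S)$ must satisfy $\min D = \min C = 0$, so $D \in \mathcal P_{\fin, 0}(S)$, and divisibility by $A$ of the sums at issue is identical in the two monoids.

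The construction I take is
\[
B_1 = B_2 = \cdots = B_N = \{0, 2a, 3a\}, \qquad B_{N+1} = \{0, a, (3N+2)a\} \, ,
\]
all of which lie in $\mathcal P_{\fin, 0}(S)$ because $a \in S$. The engine is the immediate characterization that $A = \{0, a\}$ divides a finite $C \subset \N_0$ precisely when every $c \in C$ satisfies $c - a \in C$ or $c + a \in C$. Since every set in the argument sits inside $a \cdot \N_0$, I identify any such $C$ with $\widehat C \subset \N_0$ via $C = a \cdot \widehat C$ and rephrase the criterion as: $A \mid C$ if and only if no integer in $\widehat C$ is isolated in the usual sense.

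The core computation is that $m B_1 = a \cdot (\{0\} \cup [2, 3m])$ for every $m \ge 1$ (a routine induction on $m$), from which it follows, for $m \ge 0$ (with the last interval read as empty when $m=0$), that
\[
m B_1 + B_{N+1} = a \cdot \bigl( [0, 3m+1] \cup \{3N+2\} \cup [3N+4, 3N+2+3m] \bigr) \, .
\]
Three cases then settle everything. If $\Omega$ omits $B_{N+1}$, then $\widehat{\sum_\Omega B_i} = \{0\} \cup [2, 3|\Omega|]$ has $0$ isolated (no $1$ present), so $A \nmid \sum_\Omega B_i$. If $\Omega$ contains $B_{N+1}$ and $|\Omega| \le N$, set $m = |\Omega| - 1 \in [0, N-1]$; the integer $3N + 2$ has neither $3N + 1$ (which sits in $[0, 3m+1]$ only when $m \ge N$) nor $3N + 3$ (absent, the next available value being $3N + 4$) as a neighbour, so $(3N+2)a$ is isolated. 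For the full sum, the choice $m = N$ yields $3m+1 = 3N+1$, merging the first cluster into the interval $[0, 3N+2]$, while $[3N+4, 6N+2]$ is itself an interval of length $3N - 1 \ge 2$; hence every element is paired and $A \mid \sum_{i=1}^{N+1} B_i$.

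The only slightly delicate point will be identifying which element supplies the obstruction in each partial sum: when $B_{N+1}$ is absent it is always $0$ (no $B_i$ contains $a$), and when $B_{N+1}$ is present but accompanied by $m < N$ copies of $B_1$, it migrates to $(3N+2)a$, whose only candidate neighbour $(3N+1)a$ requires $m \ge N$ to appear in the sum. Everything else is routine bookkeeping with interval sumsets.
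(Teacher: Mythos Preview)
Your proof is correct and follows essentially the same strategy as the paper's: construct an explicit family of elements of $\mathcal P_{\fin,0}(S)$ whose full sum is divisible by $A=\{0,a\}$ while no proper subsum is, using the criterion that $\{0,a\}$ divides $C$ precisely when no element of $C$ is $a$-isolated. The only difference is in the choice of summands: the paper takes $n$ copies of $\{0,2a\}$ together with one copy each of $\{0,2a,3a\}$ and $\{0,a,(2n+5)a\}$, whereas you fold the first two ingredients into a single repeated block $\{0,2a,3a\}$ (adjusting the large element to $(3N+2)a$ accordingly). This streamlines the case analysis slightly, since you have only two types of summands rather than three and therefore fewer shapes of proper subsums to examine; conversely the paper's version makes the role of the atom $\{0,2a\}$ explicit, which is convenient because it emphasises that the witnesses can be taken to be atoms. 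Both arguments are equivalent in substance.
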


\begin{proof}
Let $n \in \N$ and $m \in [1,n]$. We show that $\omega \big( \mathcal P_{\fin, 0}(S) , A \big) \ge n+2$. Since $\mathcal P_{\fin, 0} (S) \subset \mathcal P_{\fin} (S)$ is divisor-closed, it follows that $\omega \big( \mathcal P_{\fin}(S) , A \big) \ge n+2$, whence the assertion follows.

Clearly, the elements
\[
\{0, 2a\}, \ \{0, 2a, 3a\} \quad \text{and} \quad \{0, a, a(2n+5)\}
\]
are atoms of $\mathcal P_{\fin, 0}(S)$. We have
\[
\begin{aligned}
A_{m,n} & := m\{0,2a\} + \{0,2a, 3a\} + \{0,a,a(2n+5)\} \\
 & = \big( \{0\} \cup a \cdot [2, 2m+3] \big) + a \cdot \{0, 1, 2n+5\} \\
 & = a \cdot \big( [0, 2m+4] \cup \{2n+5\} \cup [2n+7, 2m+2n+8] \big) \,.
\end{aligned}
\]
Since
\[
\begin{aligned}
A_{n,n} & = a \cdot \Big( [0, 2n+5] \cup [2n+7, 4n+8]   \Big) \\
  & = A + a \cdot \Big( [0, 2n+4] \cup [2n+7, 4n+7]  \Big) \,,
\end{aligned}
\]
$A$ divides $A_{n,n}$ in $\mathcal P_{\fin} (S)$, which is the sum of $n+2$ atoms. We assert that $A$ does not divide any proper subsum. Clearly, every subsum that is divisible by $A$ contains the atom $\{0, a, a(2n+5)\}$. If $m < n$, then the above calculation shows that $A_{m,n}$ is not divisible by $A$.
\end{proof}

\smallskip
In contrast to the previous result, we observe that, for all $A \in \mathcal P_{\fin, 0}(S)$, the catenary degree $\mathsf c (A)$ is finite. Moreover, by \cite[Theorem 4.11]{Fa-Tr18a}, we have
\[
\{ \mathsf c (A) \colon A \in \mathcal P_{\fin, 0}(S) \} = \{ \mathsf c (A) \colon A \in \mathcal P_{\fin}(S) \} = \N \,.
\]
Similarly, if $D$ is a Dedekind domain with infinitely many maximal ideals of finite index, then
\[
\{ \mathsf c (f) \colon f \in \Int (D) \} = \N_{\ge 2} \,;
\]
this follows immediately from the main result in \cite{Fr-Na-Ri19a} but is not stated explicitly. The set of catenary degrees of Krull monoids highly depends not only of the class group but on the distribution of prime divisors in the classes \cite{Ge-Zh19a}.

Atomic monoids, for which all sets of lengths are finite,  satisfy the ascending chain condition on principal ideals  \cite[Theorem 2.28 and Corollary 2.29]{Fa-Tr18a}. Thus, power monoids of numerical monoids satisfy the ascending chain condition on principal ideals.
The following corollary  considers the ascending chain condition for more general ideals (for the concept of ideal systems we refer to \cite{HK98}). Recall that  Krull monoids (in particular,  Krull domains) satisfy the ascending chain condition on divisorial ideals.

\smallskip
\begin{corollary} \label{5.2}~

\begin{enumerate}
\item Let $S$ be a numerical monoid, $r$ be a weak ideal system on $\mathcal P_{\fin, 0}(S)$ resp. on $\mathcal P_{\fin}(S)$ such that all principal ideals are $r$-ideals. Then $\mathcal P_{\fin, 0}(S)$ resp.  $\mathcal P_{\fin}(S)$ do not satisfy the ascending chain condition on $r$-ideals.

\item Let $D$ be a Dedekind domain with infinitely many maximal ideals of finite index. Then $\omega \big( \Int (D), X \big)= \infty$. Moreover, if  $r$ is a weak ideal system on $\Int (D)$ such that all principal ideals are $r$-ideals, then $\Int (D)$ does not satisfy the ascending chain condition on $r$-ideals.
\end{enumerate}
\end{corollary}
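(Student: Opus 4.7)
My strategy for both parts is a single principle: if $H$ is an atomic monoid and $r$ is a weak ideal system on $H$ under which every principal ideal is an $r$-ideal, then ACC on $r$-ideals forces $\omega(H,a) < \infty$ for every $a \in H$. Under this hypothesis every $v$-ideal of $H$, being the intersection of the principal ideals that contain it, is automatically an $r$-ideal (intersections of $r$-closed sets are $r$-closed), so ACC on $r$-ideals upgrades to ACC on $v$-ideals (the Mori property). For Mori monoids $\omega(H,a) < \infty$ holds at every element by a classical result in factorization theory (see, e.g., \cite{Ge-HK06a}). Contrapositively, any $a \in H$ with $\omega(H,a) = \infty$ already breaks ACC on $r$-ideals.

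For Part~(1), Theorem~\ref{5.1} directly supplies such an element, namely $A = \{0,a\}$ for any $a \in S$ with $a > 0$, in both $\mathcal P_{\fin,0}(S)$ and $\mathcal P_{\fin}(S)$; the opening principle then delivers both halves of the statement at once.

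For Part~(2), the core task is to verify $\omega(\Int(D), X) = \infty$, after which the ACC conclusion is immediate from the same principle. The plan is to produce, for every $n \in \N$, elements $f_1, \dots, f_{n+1} \in \Int(D)$ with $X \mid f_1 \cdots f_{n+1}$ in $\Int(D)$ but $X \nmid \prod_{i \in \Omega} f_i$ for every proper $\Omega \subsetneq [1, n+1]$. In the model case $D = \Z$ I would take $f_1 = \cdots = f_n = p$ for a rational prime $p$ and $f_{n+1} = \binom{X}{p^n}$; the identity
\[
\frac{p^n \binom{X}{p^n}}{X} \;=\; \frac{(X-1)(X-2)\cdots(X-p^n+1)}{(p^n-1)!}
\]
shows that $X \mid p^n \binom{X}{p^n}$ in $\Int(\Z)$ (the right-hand side lies in $\Int(\Z)$, since its numerator at any integer is a product of $p^n - 1$ consecutive integers), whereas evaluating $p^k \binom{X}{p^n}/X$ at $X = p^n$ yields $p^{k-n} \notin \Z$ for every $k < n$, so $X \nmid p^k \binom{X}{p^n}$ in these cases. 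Every proper sub-product of $f_1, \dots, f_{n+1}$ is either a pure power $p^k$ (never divisible by $X$ in $\Int(\Z)$, since $X$ does not divide nonzero constants) or $p^k \binom{X}{p^n}$ with $k \le n-1$, so the claim is complete in the $\Z$ case. For a general Dedekind domain $D$ with infinitely many maximal ideals of finite index, I would run the analogous argument using an element of a maximal ideal $\mathfrak p$ of finite index in place of $p$ and a binomial-type polynomial attached to $\mathfrak p^n$ in place of $\binom{X}{p^n}$, reading off $X$-divisibility from $\mathfrak p$-adic valuations of polynomial values; alternatively the required construction is encoded in the main result of \cite{Fr-Na-Ri19a}.

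The principal technical obstacle will be the Dedekind case of Part~(2): since maximal ideals of $D$ need not be principal, the $\mathfrak p$-adic analogue of $\binom{X}{p^n}$ must be set up through localization at $\mathfrak p$, and one has to verify both global membership in $\Int(D)$ and preservation of the asymmetric $X$-divisibility pattern after combining valuations across all primes. Everything else in the argument reduces either to the opening principle or to the elementary $\Z$-level computation above.
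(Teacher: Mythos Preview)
Your overall strategy matches the paper's exactly: reduce both parts to the principle that ACC on $r$-ideals (for a weak ideal system in which principal ideals are $r$-ideals) forces $\omega(H,a)<\infty$ for all $a\in H$, then invoke Theorem~\ref{5.1} for Part~(1) and a factorization of $Xh$ into many irreducibles for Part~(2). Your explicit binomial construction for $D=\Z$ is correct and more concrete than the paper's argument, which simply quotes \cite[Theorem~2]{Fr-Na-Ri19a} to obtain, for every $n$, irreducibles $h,g_1,\ldots,g_{n+1}\in\Int(D)$ with $Xh=g_1\cdots g_{n+1}$; for the general Dedekind case you end up relying on the same reference.

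There is, however, a gap in your justification of the opening principle as it applies to Part~(1). Your route passes through $v$-ideals and the Mori property, citing \cite{Ge-HK06a} for the implication ``Mori $\Rightarrow$ $\omega(H,a)<\infty$''. But the power monoids $\mathcal P_{\fin,0}(S)$ and $\mathcal P_{\fin}(S)$ are \emph{not cancellative} (Corollary~\ref{important}), and the $v$-system and Mori theory in \cite{Ge-HK06a} are developed for cancellative monoids sitting inside a quotient group; the argument ``$v$-ideals are intersections of principal ideals, hence $r$-ideals, hence ACC on $r$-ideals yields the Mori property'' does not transfer as written. The paper sidesteps this by citing \cite[Proposition~3.3]{F-G-K-T17}, which proves the implication ``ACC on $r$-ideals $\Rightarrow$ $\omega(H,a)<\infty$ for all $a$'' directly in the unit-cancellative setting, and that is what Part~(1) actually needs. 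For Part~(2) your Mori argument is unproblematic since $\Int(D)$ is a domain.
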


\begin{proof}
1. If a monoid $H$ satisfies the ascending chain condition on $r$-ideals, then $\omega (H,a) < \infty$ for all $a \in H$ by \cite[Proposition 3.3]{F-G-K-T17}. Thus, the assertion follows from Theorem \ref{5.1}.

2. Let $K$ denote the quotient field of $D$, whence $D[X] \subset \Int (D) \subset K[X]$. In \cite[Theorem 2]{Fr-Na-Ri19a}, it is proved that, for every $n \in \N$, there are irreducible polynomials $h, g_1, \ldots, g_{n+1} \in \Int (D)$ such that
\[
X h = g_1 \cdot \ldots \cdot g_{n+1} \,.
\]
This implies that $\omega \big( \Int (D), X \big) \ge n$ for all $n \in \N$, whence  $\omega \big( \Int (D), X \big) = \infty$. Now, again by \cite[Proposition 3.3]{F-G-K-T17}, it follows that $\Int (D)$ does not satisfy the ascending chain condition on $r$-ideals.
\end{proof}

In the final remark we discuss the significance of the finiteness of catenary degrees and $\omega$-degrees for the structure of sets of lengths. Among others, these connections show that Theorem \ref{5.1} supports Conjecture \ref{1.1}.

\smallskip
\begin{remark} \label{5.3}~
All results on the structure of sets of lengths, achieved so far,  can be divided into two classes. To begin with, algebraic finiteness conditions on various classes on monoids imply that sets of lengths are highly structured (for a survey see \cite[Chapter 4]{Ge-HK06a}). The typical example of such a class of monoids are Krull monoids with finite class group. But, all classes of monoids, for which such results are established, have finite catenary degree and they are locally tame (a property which, by definition, implies that $\omega (H,a) < \infty$ for all $a \in H$). In particular, if  $H$ is cancellative and $\omega (H) < \infty$, then sets of lengths in $H$ are highly structured (\cite{Ge-Ka10a}).

The extremal case on the other side of the spectrum is that every finite subset $L \subset \N_{\ge 2}$ occurs as a set of lengths. This holds true for Krull monoids with infinite class group and prime divisors in all classes, for rings of integer-valued polynomials $\Int (D)$, with $D$ as in Corollary \ref{5.2}, some primary monoids, some weakly Krull domains, and others (see \cite{Ka99a, Fr13a, Fr-Na-Ri19a,  Ch-Fa-Wi22a}, \cite[Theorem 3.6]{Go19a},\cite[Theorem 4.4]{Fa-Zh22a}). All these monoids and domains have infinite catenary degree and infinite $\omega$-invariant.
\end{remark}

\smallskip
\section{On the density of atoms} \label{6}

Quantitative aspects of arithmetic properties of atomic monoids have been studied since the very beginning of factorization theory. To start with the oldest strand of investigations in this direction, let $H$ be an atomic monoid with a suitable norm function $\mathsf N \colon H \to \N$ that allows to develop a theory of $L$-functions (rings of integers in algebraic number fields with the usual norm are the classic example). For $k \in \N$, the counting functions
\[
\mathsf P_k (x) = \# \{ a \in H \colon \mathsf N (a) \le x, a \ \text{satisfies Property} \ \mathsf P_k \} \,.
\]
are studied, among others, for the following arithmetic properties $\mathsf P_k$:
\[
\max \mathsf L (a) \le k, \quad \text{or} \quad  |\mathsf Z (a)| \le k, \quad \text{or} \quad  |\mathsf L (a)| \le k \,.
\]
Note that $\max \mathsf L (a) =1$ if and only if $\mathsf L (a)=\{1\}$ if and only if $a$ is an atom
(see the presentations in the monographs \cite[Chapter 9]{Na04}, \cite[Chapters 8 and 9]{Ge-HK06a}, or \cite{Ka17a, Ra22a} for more recent work). In particular, the density of elements $a$ (say in the ring of integers of an algebraic number field), whose sets of lengths $\mathsf L (a)$ are intervals, is equal to one (\cite[Theorem 9.4.11]{Ge-HK06a}).
Apart from this analytic strand of investigations, there are counting results for the number of atoms dividing the powers $a^n$ for elements $a \in H$ (e.g., \cite{HK93j, Ka20a, B-G-B-S22}),  for the number of atoms in numerical monoid algebras (\cite{A-E-K-ON-T22}), and others.

In all results so far, the density of atoms (in the respective sense) was always equal to zero. For restricted power monoids of numerical monoids, the contrary holds: the density of non-atoms equals zero and the density of atoms equals one.

\smallskip
\begin{theorem} \label{th:density}
Let $S$ be a numerical monoid.

\begin{enumerate}
\item If  $H=\Pfze(S)$, then
      \begin{equation} \label{density1}
      \lim_{x \to \infty} \frac{\# \{ A \in \mathcal A (H) \colon \max (A) \le x \}}{\# \{ A \in H \colon \max (A) \le x \}} = 1 \,.
       \end{equation}

\item If $H=\Pf(S)$, then
      \begin{equation} \label{densityG}
      \lim_{x \to \infty} \frac{\# \{ A \in \mathcal A (H) \colon \max (A) \le x \}}{\# \{ A \in H \colon \max (A) \le x \}} \in [1/2,1)\cap\Q \,.
      \end{equation}
      This latter limit is 1/2 if and only if $S=\N_0$.
\end{enumerate}
\end{theorem}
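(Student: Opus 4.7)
The plan is to reduce part (1) to showing that the number of non-atoms in the monoid $H = \Pfze(S)$ with $\max A \le x$ is $o(2^x)$, while $|\{A \in H : \max A \le x\}| = \Theta(2^x)$. The denominator estimate is a direct combinatorial count: an element $A \in H$ with $\max A = m$ is an arbitrary choice of $A \cap ((0, m) \cap S)$, so using $|S \cap (0, m)| = m - g + O(1)$ (with $g$ the genus of $S$) and summing over $m \le x$, one obtains the desired $\Theta(2^x)$ estimate.

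For the numerator, I would first observe that $H$ is atomic (since $\max \colon H \to \N_0$ is an additive length function), so every non-atom $A$ admits a factorization $A = B + C$ with $B$ a non-identity atom of cardinality at least $2$. Writing $H_{\le x} = \{A \in H : \max A \le x\}$, this gives the union bound
\[
\#\{A \in H_{\le x} : A \text{ non-atom}\} \le \sum_{B} \#\{A \in H_{\le x} : B \mid A\},
\]
with $B$ ranging over atoms of $H$ satisfying $|B| \ge 2$ and $\max B \le x-1$. The main work is to bound the inner count. For the two-element atoms $B = \{0, b\}$, the condition $B \mid A$ means $A = C \cup (C + b)$ for some $C$; restricting $A$ to each arithmetic progression modulo $b$ inside $[0, \max A]$, the indicator becomes a binary string that must be expressible as the OR of another binary string with its one-shift. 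An elementary analysis identifies the admissible strings as exactly those avoiding the substring $010$, and a transfer-matrix computation shows they number $\Theta(\lambda^{\ell})$ for strings of length $\ell$, where $\lambda \approx 1.755$ is the real root of $X^3 - 2X^2 + X - 1$. Multiplying over residue classes and summing over $b$, the total contribution from two-element atoms is $O(x \lambda^x) = o(2^x)$.

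For atoms $B$ with $|B| \ge 3$ I would apply a similar pattern-avoidance analysis: the forbidden patterns become longer (for instance, for $B = \{0, 1, 2\}$ the runs of $1$s must have length $\ge 3$), yielding growth rates $\lambda_B \le \lambda$. Stratifying the sum by $|B| = k$ and $\max B = b$---using the crude bound of at most $\binom{b-1}{k-2}$ atoms in each class---the total over all atoms should still be $o(2^x)$, completing part (1). For part (2), Theorem \ref{3.1}.1 applied with $S = \N_0$ gives $\Pfn = \{\{k\} : k \ge 0\} \oplus \Pfz$, so the atoms of $\Pfn$ are exactly $\{1\}$ together with the atoms of $\Pfz$; counting $|\{A \in \Pfn : \max A \le x\}| \sim 2^{x+1}$ against the $\sim 2^x$ atoms from part (1) yields the limit $1/2$. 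For $S \subsetneq \N_0$, the splitting $A = \{\min A\} + (A - \min A)$ fails in $\Pf(S)$ precisely when $A - \min A \not\subseteq S$; since such $A$ may then remain atoms, the ratio rises strictly above $1/2$, and a direct computation using $|S \cap [0, x]|$ yields a rational limit in $(1/2, 1)$.

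The hard part will be the uniform control of the growth rates $\lambda_B$ and the bookkeeping for the union bound over all atoms $B$. While each $\lambda_B < 2$, arguing that $\sum_B \lambda_B^x = o(2^x)$ requires showing both that the number of atoms with a given forbidden pattern does not grow too fast, and that the patterns become genuinely more restrictive with the size of $B$. This is the step where the proof would most depend on subtle combinatorial counting, and where I expect the bulk of the detailed work to lie.
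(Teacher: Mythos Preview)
Your plan for part~1 has a genuine gap at the union-bound step. You propose to control the non-atoms via
\[
\#\{A \in H_{\le x} : A \text{ non-atom}\} \le \sum_{B} \#\{A \in H_{\le x} : B \mid A\},
\]
summed over atoms $B$, and then to bound each summand by a pattern-avoidance count $\lambda_B^x$ with $\lambda_B < 2$. The difficulty is that the sum has far too many terms: by the very theorem you are proving, there are on the order of $2^x$ atoms $B$ with $\max B \le x-1$. Even the trivial estimate $\#\{A : B \mid A\} \le 2^{x-\max B}$, stratified by $\max B = b$ and $|B| = k$, gives
\[
\sum_{b,k} \binom{b-1}{k-2}\, 2^{x-b} \;=\; 2^{x-1} \sum_{b\ge 1} 2^{-(b-1)} \sum_{k\ge 2} \binom{b-1}{k-2} \;=\; 2^{x-1} \sum_{b\ge 1} 1,
\]
which diverges. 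Refining via $\lambda_B$ does not obviously help: for atoms such as $B = \{0,1,m\}$ with $m$ large, the constraint $B \mid A$ is essentially the two-element constraint on $[0,m-1]$ and is even weaker on $[m,x]$, so one cannot expect a bound better than $\lambda^x$ uniformly in $k$, while the number of $k$-element atoms grows like $\binom{x}{k-1}$. You correctly flag this as ``the hard part'', but the proposal offers no mechanism to overcome it, and I do not see how to close it along these lines.

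The paper avoids this trap by a different case analysis on the decomposition $A = B + C$ that never sums over all possible $B$. It distinguishes (i)~$|B|+|C| < N/5$, handled by the crude pair count $2^{-N}\binom{2N}{N/5}$; (ii)~$\min(|B|,|C|) \in \{2,3\}$, where a forbidden-pattern argument (essentially your two-element idea) suffices because there are only $O(N^3)$ choices for the small summand; and (iii)~$|B|+|C| \ge N/5$ with $\min(|B|,|C|) \ge 4$, so that one summand, say $B$, satisfies $|B| \ge N/10$. The idea you are missing is in case~(iii): fixing any $4$-subset $D \subset C$, every $n \in B$ satisfies $n + D \subset A$, so this $4$-point pattern occurs at least $N/10$ times in $[0,N]$, well above the expected $N/16$ for a uniformly random set. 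A large-deviation lemma then bounds the probability by $\exp(-\Omega(N))$, and the union bound is only over the $O(N^4)$ choices of $D$, not over all $B$.

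For part~2 your outline for $S = \N_0$ is correct. For $S \ne \N_0$ it is too vague to yield rationality or the strict inequality: the paper obtains both by showing that, up to a density-zero set, $A \in \Pf(S)$ is a non-atom of the form $\{k\} + B$ (with $k>0$) if and only if $A \cap [0, d(S)]$ lies in a certain finite family, where $d(S) = \max \mathcal A(S) + \mathsf F(S)$. This reduces the limit to a ratio of two explicit integers.
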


The first item of  Theorem \ref{th:density} already follows from a result of Shitov
\cite{shitov}, stated in the language of boolean polynomials. Shitov answered
a question of Kim and Roush \cite{kimroush} and confirmed an equivalent conjecture \cite[Conjecture 10]{dismal} stated in the language of ``dismal arithmetic''.
We are thankful to S.~Tringali for pointing out these references to us.
However, our result  is quantitatively superior as the decay rate of the number of non-atoms is vastly improved (see Theorem  \ref{proba0} and Remark \ref{quantitative}), and our method
is different. In particular, we do not require the language of ``irreducible boolean polynomials'', but we think of Theorem \ref{th:density} in a probabilistic way. Let the probability be denoted by $\Pp$, and
let us think about random subsets $A$ of $[0,N]$ as follows: we have
$A=\{n\in [0,N]:\xi_n=1\}$,  where $(\xi_n)_{n\leq N}$ is a sequence of independent and identically distributed random variables satisfying $\Pp(\xi_n=0)=\Pp(\xi_n=1)=1/2$.
A sufficient reference for the very elementary probability theory we will use is \cite{lesigne}.
 We now state our result within this probabilistic framework.

\smallskip
\begin{theorem} \label{proba0} \label{6.1}
Almost no set $A\subset [0,N]$ is a genuine sumset.
More precisely, let $\Dec(N)$ be the set of subsets
$A\subset [0,N]$ such that there exist
$B,C\subset [0,N]$ satisfying $A=B+C$ and $\min(\abs{B},\abs{C})\geq 2.$
Then
\[
\Pp_{A\subset [0,N]}(A\in\Dec(N))=\exp(-\Omega(N)) \,.
\]
\end{theorem}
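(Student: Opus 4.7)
My plan is to bound $\Pp(A \in \Dec(N))$ by a structural analysis of any hypothetical decomposition $A = B + C$ with $|B|, |C| \ge 2$. A preliminary shift reduction---valid because for uniform random $A$ the quantity $\max A - \min A$ concentrates near $N$, with the probability of being below $N - t$ decaying geometrically in $t$---lets me assume $0 \in B \cap C$ and $\max B + \max C = N$, and by the $B \leftrightarrow C$ symmetry also $m := \max B \le N/2$. The pivotal observation is that for any fixed $B$, every valid $C$ lies in $C^{\star}(A,B) := \{c : B + c \subset A\}$, so $A$ admits a decomposition with first summand $B$ if and only if $A = B + C^{\star}(A,B)$. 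Unraveled, this says every $a \in A$ must satisfy $a - b + B \subset A$ for some $b \in B$.

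In the case $|B| = 2$, writing $B = \{0, m\}$, the condition collapses to the purely local statement that every $a \in A \cap [m, N - m]$ has at least one of $a \pm m$ in $A$. The number of $m$-isolated positions---those $a$ with $a \in A$ and $a \pm m \notin A$---is a sum of $\Theta(N)$ indicators of mean $1/8$ with only short-range dependence (two such events are either incompatible or share at most one bit of $A$), so a concentration inequality for sums of weakly dependent indicators yields $\Pp(\text{no } m\text{-isolated position}) \le e^{-\Omega(N)}$ uniformly for $m \le \tfrac{7N}{16}$, and a union bound over $m$ preserves this. For $m$ in the residual range $[\tfrac{7N}{16}, N]$, the trivial count $2^{N - m + 1}$ of sets of the form $C \cup (C + m)$ is already a $2^{-\Omega(N)}$ fraction of $2^{N+1}$ after summing over $m$.

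For $|B| = k \ge 3$, I would fix $B$ and estimate the per-element failure probability. Conditioning on $a \in A$, each of the $k$ translates $a - b + B$ contains $a$ and relies on $k - 1$ further bits of $A$, so the union bound gives $\Pp(\exists b \in B : a - b + B \subset A \mid a \in A) \le k \cdot 2^{-(k-1)}$, strictly less than $1$ for $k \ge 3$. Hence $\Omega(N)$ witnesses of non-decomposability appear in expectation, and a bounded-differences concentration applied to the bit-exposure filtration of $A$ yields $\Pp(A = B + C^{\star}(A,B)) \le e^{-c_k N}$ for a positive constant $c_k$. One then union-bounds over $B$: for each $m \le N/2$ and each $k \ge 3$ the number of admissible $B \subset [0, m]$ of size $k$ is $\binom{m - 1}{k - 2}$, which is polynomial in $N$ for each fixed $k$, making the contribution $e^{-\Omega(N)}$. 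I expect the main technical obstacle to be making the concentration uniform as $k$ grows comparable to $N$, where both McDiarmid's Lipschitz constants and the number of candidate $B$'s deteriorate; in that regime I would partition the range of $k$ into sub-regimes (constant, logarithmic, polynomial, linear in $N$) and deploy a complementary counting estimate---for instance leveraging $|A| \le |B| \cdot |C|$ together with the concentration of $|A|$ near $N/2$---rather than rely on the per-element argument.
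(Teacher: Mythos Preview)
Your shift reduction and the $|B|=2$ analysis are sound and essentially parallel to the paper's treatment of $\Dec_2$. The gap is exactly where you flag it: the union bound over all candidate $B$ with $|B|=k$ cannot be closed for $k$ growing with $N$. Your McDiarmid bound has Lipschitz constant $\Theta(k^2)$, giving $\exp\bigl(-\Omega(N/k^4)\bigr)$ per fixed $B$, while the number of $B\subset[0,m]$ of size $k$ is $\binom{m-1}{k-2}$; the product already blows up around $k\asymp N^{1/5}$, and no refinement of the per-element estimate repairs this, since for $k$ linear in $N$ there are $2^{\Theta(N)}$ candidate $B$'s. Your proposed fallback, using $|A|\le|B|\cdot|C|$ together with $|A|\approx N/2$, only yields a \emph{lower} bound $|C|\ge N/(2k)$, which does not help a counting argument.

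The paper sidesteps the union bound over $B$ entirely by splitting on $|B|+|C|$ rather than on $|B|$. If $|B|+|C|<N/5$, the number of \emph{pairs} $(B,C)$ is at most $\binom{2N}{N/5}=O(0.96^N)$, already $o(2^N)$. If $|B|+|C|\ge N/5$ and $\min(|B|,|C|)\ge 4$, then one summand, say $B$, has size $\ge N/10$; now fix any four-element subset $D$ of the \emph{other} summand $C$. Since $B+D\subset A$, the pattern $n+D\subset A$ occurs for every $n\in B$, i.e.\ at least $N/10$ times, whereas its expectation is $(N-\max D)/16\le N/16$. A single pattern-count concentration lemma (your Lemma-style argument, but for a fixed $4$-pattern) makes this deviation exponentially unlikely, and the union bound is only over $O(N^4)$ choices of $D$. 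The crucial trick you are missing is thus to replace the exponential family of summands $B$ by the polynomial family of constant-size ``fingerprints'' $D$ inside the complementary summand.
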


\smallskip
\begin{remark} \label{quantitative}
Thus, the number of subsets of $[0,N]$,  which are genuine sumsets, is bounded above by $c^N$ for some $c<2$.
The bound of Shitov \cite{shitov} is of the form $2^{n-o(n)}$, which is much larger than ours.
Considering that for $B=\{0,1\}$, the sets $B+C$ for $C\subset 2\cdot [0,\lfloor (N-1)/2\rfloor]$ are pairwise distinct and yield $\Omega(2^{N/2})$ sets which are not atoms, we conclude that $c\geq \sqrt{2}$.
In fact, it follows from \cite[Theorem 3.3]{shitov} that $c\geq 2^{0.811}\geq 1.754$.
It seems difficult to estimate the correct value of $c$.
\end{remark}
\smallskip
We will derive Theorem \ref{th:density} from Theorem \ref{proba0}.
The latter looks very similar to a theorem of Wirsing \cite{Wi53}, which states that almost all infinite subsets of $\N$
are not equal to a sumset $B+C$ with $\min (\abs{B},\abs{C}) \ge 2$.
However, it does not seem that  the  result on infinite sets implies Theorem \ref{6.1}, nor does Wirsing's proof  directly yield Theorem \ref{6.1}. Nevertheless, our proof is inspired from his. We refer to \cite{bienvenu} for further probabilistic results about
additive decomposability.

It will be convenient to work on the zero-one sequence $(\xi_n)_{n\leq N}$ itself.
We need a couple of probabilistic lemmas for such sequences, or more generally for sequences over a finite alphabet.

If $N \in \N$, $a=(a_1,\ldots,a_N)\in X^N$ is a sequence over a finite alphabet $X$, and $z\in X$, then
\[
A(z)=\abs{\{n\in [1,N] : a_{n}=z\}}
\]
denotes the number of occurrences of $z$ in $a$.
 The probability distribution we consider on $X^N$ is the uniform distribution, that is,
the random variables $a_i$ are independent and uniformly distributed on $X$.
We will often encounter the function $f(g,\varepsilon)=\frac{(1-g^{-1})^{1-\gamma}}{(1-g\varepsilon)^\gamma (1-\gamma)^{1-\gamma}}$
for integers $g\geq 2$ and $\varepsilon\in [0,1/g)$, where $\gamma=1/g-\varepsilon$.

\smallskip
\begin{lemma} \label{deviation}
Let $X$ be a finite alphabet of order $\abs{X} = g \geq 2$ and let $\varepsilon\in (0,1/g)$.
Then, for any $s\in X$,
$$\Pp(\abs{A(s)/N-1/g}>\varepsilon)\ll_{g,\varepsilon} N^{1/2}f(g,\varepsilon)^N.$$
\end{lemma}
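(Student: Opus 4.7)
The plan is to recognize that $A(s) = \sum_{n=1}^{N} \mathbf{1}_{a_n = s}$ is a binomial random variable with parameters $(N, 1/g)$, since the letters $a_1, \dots, a_N$ are i.i.d.\ uniform on $X$. The lemma is therefore a standard large-deviation estimate, and the function $f(g, \varepsilon)$ is precisely $\exp(-D(\gamma \| 1/g))$, the exponential of the negative binary Kullback--Leibler divergence, with $\gamma = 1/g - \varepsilon$.

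The core step uses Stirling's formula to estimate individual binomial probabilities. Uniformly for $k = \alpha N$ with $\alpha$ in any compact subinterval of $(0, 1)$,
\[
\Pp(A(s) = k) \;=\; \binom{N}{k} g^{-k}(1 - g^{-1})^{N-k} \;=\; \frac{1 + O_{g,\varepsilon}(1/N)}{\sqrt{2\pi N \alpha(1-\alpha)}} \left[\left(\frac{1/g}{\alpha}\right)^{\alpha}\left(\frac{1 - 1/g}{1 - \alpha}\right)^{1-\alpha}\right]^{N}.
\]
The bracketed base is strictly increasing on $(0, 1/g)$ and strictly decreasing on $(1/g, 1)$, so on the lower-tail interval $[0, N\gamma]$ it attains its supremum at $\alpha = \gamma$; the identity $g\gamma = 1 - g\varepsilon$ then shows that this supremum equals exactly $f(g, \varepsilon)$. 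Summing the at most $N+1$ individual probabilities in the lower tail yields $\Pp(A(s) < N\gamma) \ll_{g, \varepsilon} N^{1/2} f(g, \varepsilon)^{N}$. The upper tail $\Pp(A(s) > N(1/g + \varepsilon))$ is handled by the same procedure with $\alpha$ ranging over $[1/g + \varepsilon, 1)$; combining both tails gives the claimed bound.

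The only technical care needed is uniformity of the Stirling estimate in $k$, which is routine since $\gamma$ and $1 - \gamma$ are bounded away from $0$ and $1$ for fixed $g, \varepsilon$, so the prefactor $(2\pi N \alpha(1-\alpha))^{-1/2}$ contributes the claimed $N^{-1/2}$ uniformly in $k$. No deep ingredient is required; the conceptual content of the argument is the identification of the bracketed quantity at $\alpha = \gamma$ with the expression $f(g, \varepsilon)$ defined in the preamble to the lemma.
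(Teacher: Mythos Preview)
Your approach is essentially identical to the paper's: both recognize $A(s)$ as $\mathrm{Bin}(N,1/g)$, bound the tail sum by $N$ times the boundary term $\binom{N}{\lfloor \gamma N\rfloor}g^{-\gamma N}(1-1/g)^{(1-\gamma)N}$, and apply Stirling there to produce $N^{1/2}f(g,\varepsilon)^N$. The one place to tighten is your uniformity remark: Stirling is not uniform for $k$ near $0$ (where $\alpha(1-\alpha)\to 0$), so the clean way to see that $\max_{k\le \gamma N}\Pp(A(s)=k)$ is attained at $k=\lfloor\gamma N\rfloor$ is to cite the unimodality of the binomial pmf directly, exactly as the paper does, rather than the monotonicity of the Stirling base alone.
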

We will only ever use this bound in the regime where $g$ and $\varepsilon$ are constant.
Then $f(g,\varepsilon)\in (0,1)$ is a constant, whence the right hand side decays exponentially as $N$ tends to infinity.

\begin{proof}
For any $\nu\leq N$ the number of sequences in $X^N$ which have exactly $\nu$ occurrences of the symbol $s\in X$ is $\binom{N}{\nu}(g-1)^{N-\nu}$.
Note that, when $\nu$ ranges from $0$ to $N$, this increases from $0$ to $\nu=\lfloor N/g\rfloor$ and decreases from $\lceil N/g\rceil$ to $N$. Let $\gamma=1/g-\varepsilon$.
Let us bound $\Pp(A(s)/N<\gamma )$, the deviation in the other direction being analogous.
We have $\Pp(A(s)/N<\gamma)\leq g^{-N}\sum_{\nu=0}^{\lfloor \gamma N\rfloor}\binom{N}{\nu}(g-1)^{N-\nu}$. By monotonicity of the summand,
$$\Pp(A(s)/N<\gamma)\leq Ng^{-N}\binom{N}{\lfloor \gamma N\rfloor}(g-1)^{N(1-\gamma)}=
N\binom{N}{\lfloor \gamma N\rfloor}g^{-\gamma N}(1-g^{-1})^{(1-\gamma)N}\, .$$
Applying Stirling's formula we conclude.
\end{proof}

Given $b_1<\ldots<b_r\leq N$ forming a sequence $b$,
and $(z_1,\ldots,z_r)\in X^r$, let $A(z;b)$ be the number of $n\leq N-b_r$ such that
$a_{n+b_i}=z_i$ for all $i\in [1,r]$.

\smallskip
\begin{lemma} \label{pattern}
Let $r, N \in \N$,  $\delta\in (0,1)$, $z\in \{0,1\}^r$, and $B=\{b_1, \ldots, b_r\} \subset \N_0$
with $0\leq b_1<\ldots<b_r\leq N$. Then
\[
\Pp(\abs{A(z;b)-(N-b_r)2^{-r}}>\delta N)\ll \exp(-cN) \,,
\]
for some positive constant $c$ depending on $\delta$ and $r$ only.
\end{lemma}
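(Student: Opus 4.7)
The plan is to apply the Azuma--Hoeffding inequality to the Doob martingale associated to $A(z;b)$. Writing $A(z;b) = \sum_n X_n$ with $X_n = \mathbf{1}[a_{n+b_i} = z_i \text{ for all } i \in [1,r]]$, each $X_n$ has expectation $2^{-r}$, so $\mathbb{E}[A(z;b)] = (N-b_r)\,2^{-r}$ up to an inconsequential boundary term of order $2^{-r}$, which is negligible compared to $\delta N$ for $N$ large. It therefore suffices to bound $\Pp(|A(z;b) - \mathbb{E}[A(z;b)]| > \delta N / 2)$ exponentially, absorbing the small-$N$ regime into the implicit constants.

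The difficulty is that the $X_n$'s are not independent: two windows share a coordinate whenever the starting indices differ by at most $b_r$, and $b_r$ may be as large as $N$. Consequently, the natural approach of partitioning the indices into classes at spacing $> b_r$ (within which the $X_n$'s are i.i.d.\ Bernoulli$(2^{-r})$) and union-bounding Chernoff across classes fails to yield a decay rate uniform in $b_r$: it produces only something of order $\exp(-c\,\delta^2 N / b_r)$, which is useless in the range where $b_r$ is proportional to $N$.

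The bounded-differences method remedies this. Set $M_k = \mathbb{E}[A(z;b) \mid a_1, \ldots, a_k]$ for $k \in [0, N]$, so that $M_0 = \mathbb{E}[A(z;b)]$ and $M_N = A(z;b)$. The key step will be to verify the Lipschitz bound $|M_k - M_{k-1}| \le r$: the coordinate $a_k$ appears only in those $X_n$ with $n \in \{k - b_i : i \in [1, r]\}$, a set of at most $r$ indices, and each $X_n \in \{0, 1\}$, so flipping $a_k$ alters $A(z;b)$ by at most $r$. Azuma--Hoeffding then yields
\[
\Pp\bigl(|A(z;b) - \mathbb{E}[A(z;b)]| \ge t\bigr) \le 2 \exp\!\left(-\frac{t^2}{2 N r^2}\right),
\]
and choosing $t = \delta N / 2$ gives the lemma with $c = \delta^2/(8 r^2)$, depending on $\delta$ and $r$ only. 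The essential point, and what makes the bounded-differences viewpoint preferable here, is that a single coordinate $a_k$ affects only $r$ summands of $A(z;b)$ regardless of the magnitude of $b_r$, so the Lipschitz constant, and hence $c$, is automatically uniform in $b_r$.
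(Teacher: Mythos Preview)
Your proof is correct and takes a genuinely different route from the paper. The paper's argument is a decoupling/coloring approach: it forms a graph on $[0, N-b_r]$ in which two indices $n, m$ are adjacent whenever the windows $n+B$ and $m+B$ overlap, i.e., $n-m \in B-B$; since $|B-B| \le r^2$, this graph has maximum degree at most $r^2$ and hence chromatic number at most $r^2$, so the index set can be partitioned into at most $r^2$ classes within each of which the corresponding indicators are genuinely independent. A Chernoff-type bound (Lemma~\ref{deviation}) is then applied class by class and summed. You instead bypass the decoupling entirely and apply Azuma--Hoeffding to the Doob martingale, using the observation that each underlying bit $a_k$ appears in at most $r$ windows, so the Lipschitz constant is $r$ regardless of $b_r$. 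Note that the partitioning strategy you dismiss---spacing by more than $b_r$, leading to $\sim b_r$ classes---is not the one the paper uses; overlap is governed by the difference set $B-B$, not by $b_r$, so the paper also obtains a constant $c$ depending on $r$ and $\delta$ only. That said, your argument is cleaner: it avoids the auxiliary large-deviation lemma and the bookkeeping of small versus large color classes, and it yields an explicit constant $c = \delta^2/(8r^2)$. Both proofs ultimately rest on the same structural fact, phrased dually: each coordinate influences at most $r$ windows (your Lipschitz bound), equivalently each window overlaps at most $r^2$ others (the paper's degree bound).
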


Note that the statement is trivially true if $b_r>(1-\delta)N$.

\begin{proof}
Let $G$ be the graph whose vertex set is $[0,N]$ and where any two elements $n,m$ are connected
if $(n+B)\cap (m+B)\neq\emptyset$, equivalently $n-m\in B-B$.
Obviously, the degree of each vertex is at most $\abs{B-B}\leq r^2$.
As a result, the chromatic number is at most $r^2$.
So we can split $[0,N-b_r]$ into $\chi=\chi(G)\leq r^2$ classes $C^{(1)},
\ldots,C^{(\chi)}$ such that for each $i\in [\chi]$, the translates $n+B$, $n\in C^{(i)}$ are disjoint.
Let $d^{(i)}=((a_{n+b_1},\ldots,a_{n+b_r}))_{n\in C^{(i)}}$.
We can see each $d^{(i)}$ as a sequence of length $\abs{C^{(i)}}$ of elements of the alphabet $X=\{0,1\}^r$, which consists of $g=2^r$ symbols.
Also for each $i\in [\chi]$, the random variables $(a_{n+b_1},\ldots,a_{n+b_r})$ for $n\in C^{(i)}$ are pairwise independent by construction.
As usual, we denote for every $z\in X$
the number of $n\in C^{(i)}$ satisfying $(a_{n+b_1},\ldots,a_{n+b_r})=z$ by $D^{(i)}(z)$.

Now
$A(z;b)=\sum_{i\leq\chi} D^{(i)}(z)$.
Let $\eta$ and $\varepsilon$ be real numbers in $(0,1)$ to be specified later.
Let $M=\eta N$.
Let $k$ the number of $i\in [1, \chi]$ such that  $\abs{C^{(i)}}\geq M$.
Upon reordering, we may assume without loss of generality that $\abs{C^{(i)}}\geq M$
if, and only if, $i\in [1, k]$.
Then $A(z;b)=\sum_{i\leq k} D^{(i)}(z)+O(\eta r^2 N)$
and  by Lemma \ref{deviation}, we have
\[
\Pp(\abs{D^{(i)}(z)/\abs{C^{(i)}}-g^{-1}}>\varepsilon)\ll M^{1/2}f(g,\gamma)^M
\]
for $i\in [1, k]$.
The right-hand side above is indeed smaller than a constant times
$\exp(-cN)$ for some positive constant $c$ depending on $\varepsilon,\eta$ and $r$ only.
Therefore with probability $1-O(\exp(-cN))$
we have $$\sum_{i\leq k} D^{(i)}(z)=\sum_{i\leq k} \abs{C^{(i)}}g^{-r}+O(r^2\varepsilon N)$$
in which case
$$A(z;b)
=(N-b_r)g^{-r}+O(r^2(\eta+\varepsilon) N).$$
Taking $\eta=\varepsilon=c'\delta/(2r^2)$ for some suitable constant $c'>0$, we obtain the assertion.
\end{proof}

\smallskip
\begin{proof}[Proof of Theorem \ref{proba0}]
Fix some large integer $N$.
We write $\Dec(N)=\Dec_1\cup\Dec_2\cup\Dec_3$ where $\Dec_1,\Dec_2,\Dec_3$ are the set of subsets $A\subset [0,N]$ of the form $A=B+C$ for some sets $B,C\subset\N$ satisfying $\min(\abs{B},\abs{C})\geq 2$ and, respectively,
\begin{enumerate}
\item[(i)]  $\abs{B}+\abs{C}<N/5$
\item[(ii)] $\min(\abs{B},\abs{C})\in \{2,3\}$
\item[(iii)]  $\abs{B}+\abs{C}\geq N/5$ and $\min(\abs{B},\abs{C})\ge 4$.
\end{enumerate}
For the first item, note that
\[
\Pp_{A\subset [0,N]}(A\in \Dec_1)\leq 2^{-N}\binom{2N}{N/5} \,,
\]
which by Stirling's Formula can be shown to be $O(0.96^N)$.

Let us tackle $\Dec_2$. Let $B\subset [0,N]$ have precisely two or three elements. Let $C\subset [0,N]$ and $A=B+C$.
If $\max(B)\geq N/3$, and if $A\subset [0,N]$,
then $C\subset [0,2N/3]$. So there are at most $N^32^ {2N/3}$
such sumsets $B+C$.
Now assume $\max(B)\leq N/3$. Let $B=\{0,b_1,b_1+b_2\}$ where $b_1>0,b_2\ge 0$ are integers.
Since $A=B+C$, for every $n\in A$, either $n-b_2\in A$ or $n+b_1\in A$ or $n-b_1\in A$. Thus considering
$f=1_A:[0,N]\rightarrow \{0,1\}$, one can see that
$(f(n-b_1),f(n),f(n+b_1),f(n-b_2))$ can not equal $(0,1,0,0)$.
But for a random sequence $f : [0,N]\rightarrow \{0,1\}$, the number of $n\in [\max(b_1,b_2),N-b_1]$ (interval of length at least $N/3$) such that
$(f(n-b_1),f(n),f(n+b_1),f(n-b_2))=(0,1,0,0)$
is 0 with probability $\exp(-\Omega(N))$ by Lemma \ref{pattern}.
This holding for any such $B$, for which there are at most $N^3$ choices, we conclude that
$
\Pp_{A\subset [0,N]}(A\in \Dec_2)\leq \exp(-\Omega(N)).
$

We move on to $\Dec_3$. Let $B,C\subset [0,N]$ satisfy
$\abs{B}+\abs{C}\geq N/5$ and $\min(\abs{B},\abs{C})\ge 4$.
Let $A=B+C$.
We may assume by symmetry and pigeonhole principle that $\abs{B}\geq N/10$, $\abs{C}\geq 4$.
Let $D\subset C$ have cardinality 4.
Then for any $n\in B$, we have $n+D\subset A$.
Therefore, the number of $n\leq N$ such that $n+D\in A$ is at least  $N/10$; this is much more than the expected $(N-\max D)/16\leq N/16$, so by Lemma \ref{pattern} $\Pp_{A\subset [0,N]}(A\in\Dec_3)\leq\exp(-\Omega(N)).$\end{proof}

\begin{proof}[Proof of Theorem \ref{th:density}]
Let $S$ be a numerical monoid.

1. Let $H=\Pfze(S)$.
 Observe that the number of sets $A\in H$ which are not
atoms and satisfy $\max(A)\leq x$ is at most the number of sets $A\subset [0,x]$ such that there exist $B,C\subset [0,N]$ both of cardinality at least two such that $A=B+C$,
whereas the number of sets $A\in H$ which satisfy $\max(A)\leq x$
is bounded below by a constant (depending on $H$) times
the number of sets $A\subset [0,x]$.
Applying Theorem \ref{proba0} yields equation \eqref{density1}.

2. Let $H=\Pf(S)$.
Observe that an element of $H$ which is not an atom is either
a sum set $B+C$ with $\min(\abs{B},\abs{C})\geq 2$ or
a set of the form $\{k\}+B$ for some $k\in S\setminus\{0\},B\in H\setminus\{\{0\}\}$.
The non-atoms of the first kind have density 0 because of Theorem \ref{6.1}.
The non-atoms of the second kind are precisely the elements of $H'=\bigcup_{a\in\mathcal{A}(S)}\Pf(a+S)\setminus\{\{a\}\}$.
If $S=\N_0$, we have $H'=\Pf(\N)\setminus\{\{1\}\}$,
so that the limit in equation \eqref{densityG} exists and is equal to 1/2.
Otherwise, let $d(S)=\max\mathcal{A}(S)+ \mathsf{F}(S)$ and observe that a set $A\in H$ satisfies $A\in H'$ if and only if
$A\cap [0, d(S)]\in H'\cup\{\emptyset\}$; indeed, $[d(S)+1,+\infty)\subset \bigcap_{a\in\mathcal{A}(S)} (a+S)$.
Therefore the limit in equation \eqref{densityG} exists and is equal
to $(1+\abs {H'\cap \mathcal P ([0, d(S)])})
2^{-\abs{S\cap  [0, d(S)]}}$.
This is less than 1/2 since $H'\subset \Pf(S\setminus\{0\})$ and $H'$ misses $\mathcal{A}(S)\subset S\cap [1,d(S)]$, and obviously positive.
\end{proof}

\medskip
\noindent
{\bf Acknowledgement.} We would like to thank the reviewers for their careful reading and all their comments.

\medskip
\noindent
{\bf Note added in proof.} When this article went to press, we were informed that Salvatore Tringali and Weihao Yan gave an affirmative answer to Conjecture \ref{4.5}.

\providecommand{\bysame}{\leavevmode\hbox to3em{\hrulefill}\thinspace}
\providecommand{\MR}{\relax\ifhmode\unskip\space\fi MR }
\providecommand{\MRhref}[2]{%
  \href{http://www.ams.org/mathscinet-getitem?mr=#1}{#2}
}
\providecommand{\href}[2]{#2}

\end{document}